\newtheorem{theorem}{Theorem}[section]
\newtheorem{corollary}{Corollary}
\newtheorem{lemma}[theorem]{Lemma}
\newtheorem{proposition}{Proposition}
\newtheorem{remark}{Remark}
\newtheorem{assumption}[theorem]{Assumption}
\newtheorem{example}[theorem]{Example}
\newenvironment{proof}[1][Proof]{\noindent\textbf{#1.} }{\ \rule{0.5em}{0.5em}}
\begin{document}
\begin{center}
{\bf \huge Long-time existence of solutions to nonlocal nonlinear bidirectional wave equations}
\\ ~ \\ \vspace*{20pt}
H. A. Erbay$^{1}$, S. Erbay$^{1}$,  A. Erkip$^{2}$
\vspace*{20pt}

$^{1}$Department of Natural and Mathematical Sciences, Faculty of Engineering, Ozyegin University,  Cekmekoy 34794, Istanbul, Turkey
\vspace*{20pt}

$^{2}$Faculty of Engineering and Natural Sciences, Sabanci University,  Tuzla 34956,  Istanbul,    Turkey

\end{center}

\let\thefootnote\relax\footnote{E-mail:   husnuata.erbay@ozyegin.edu.tr, saadet.erbay@ozyegin.edu.tr, \\
albert@sabanciuniv.edu}

\begin{abstract}
We consider the Cauchy problem defined for a general class of nonlocal wave equations modeling bidirectional wave propagation in a nonlocally and nonlinearly elastic medium whose constitutive equation is given by a convolution integral.  We prove a long-time existence result for the nonlocal wave equations with a power-type nonlinearity and a small parameter.  As the energy estimates involve a loss of derivatives, we follow the Nash-Moser approach proposed by Alvarez-Samaniego and Lannes. As an application to the long-time existence theorem, we consider the limiting case in which the kernel function is the Dirac measure and the nonlocal equation reduces to the governing equation of one-dimensional classical elasticity theory. The present study also extends our earlier result concerning local well-posedness for smooth kernels to nonsmooth kernels.
\end{abstract}
\vspace*{10pt}

\noindent
2010 AMS Subject Classification:   35A01, 35L15,  35L70, 35Q74, 74B20
\vspace*{10pt}

\noindent
Keywords:  Long-time existence,  Nonlocal wave equation, Nash-Moser iteration, Improved Boussinesq equation.

\section{Introduction}\label{sec1}

In the present paper we prove the long-time existence and uniform estimates of solutions to the Cauchy problem
\begin{eqnarray}
   && u_{tt}=\beta\ast \big( u+\epsilon^{p}u^{p+1}\big)_{xx}, ~~~~ x\in \mathbb{R},~~~t>0,  \label{nw} \\
   && u(x,0)= u_{0}(x), ~~~~ u_{t}(x,0)= u_{1}(x)  \label{ini}
\end{eqnarray}
for sufficiently smooth initial data. Here  $u(x,t)$ is a real-valued function, $\epsilon$ is a small positive parameter  measuring the smallness of the initial data, the symbol $\ast$ denotes convolution in the $x$ -variable and $p$ is a positive integer. We assume that the convolution with the kernel function $\beta$ is a positive bounded operator on the Sobolev space $H^{s}(\mathbb{R})$. This can be realized by assuming that $\beta$ is integrable or more generally is a finite measure on $\mathbb{R}$ with positive Fourier transform.

The  nonlocal wave equation (\ref{nw}) describes the one-dimensional motion of a nonlocally and nonlinearly elastic medium and $u$ represents the elastic strain (we refer the reader to \cite{Duruk2010} both for a detailed description of the nonlocally and nonlinearly elastic medium and for some examples of the kernels widely used in continuum mechanics). Moreover, (\ref{nw}) involves many well-known nonlinear wave equations for particular choices of the kernel function. One well-known example is the improved Boussinesq  equation
\begin{equation}
    u_{tt}-u_{xx}-u_{xxtt}-\epsilon^{p}\big(u^{p+1}\big)_{xx}=0  \label{ib}
\end{equation}%
corresponding to the exponential kernel $\beta (x)=\frac{1}{2}e^{-\left\vert x\right\vert }$. On the other hand, if  $\beta$ is taken as the Dirac measure,  (\ref{nw}) reduces to the nondispersive nonlinear wave equation
 \begin{equation}
        u_{tt}-u_{xx}= \epsilon^{p}\big(u^{p+1}\big)_{xx}  \label{wa-eq}
    \end{equation}%
 of classical elasticity. The local well-posedness of (\ref{nw})-(\ref{ini})  was proved in \cite{Duruk2010} under a smoothness assumption on $\beta$. This assumption is equivalent  to saying that the operator $\beta^{\prime\prime}\ast (.)$ is bounded on $H^{s}(\mathbb{R})$.  In that case, (\ref{nw}) becomes a Banach space-valued ODE and the local existence result holds without any smallness assumption on the initial data.

In the present study, we  consider the long-time existence of solutions to (\ref{nw})-(\ref{ini}) and  provide an existence result on time intervals of order $1/\epsilon^{p}$. Additionally, we relax the restriction imposed in \cite{Duruk2010} on  the smoothness of $\beta$; in this case the smallness of $\epsilon$ guarantees that (\ref{nw}) stays in the hyperbolic regime. At this point, it is worth pointing out  that the smallness of the parameter $\epsilon$ plays an essential role in obtaining the long-time existence result. The key point in our approach is to prove that the bounds can be made uniform in  $\epsilon$.

To prove our long-time existence result we start by converting   (\ref{nw})  into a perturbation of the symmetric hyperbolic linear system and obtain the energy estimates  for the corresponding linearized equation. Nevertheless, the energy estimates involve a loss of derivatives. Due to the loss of derivatives one cannot directly pass from the linearized equation to the nonlinear equation via the Picard iteration, and we need  a Nash-Moser-type approach.    In that respect, the Nash-Moser theorem  proved in \cite{Alvarez2008} for a wide class of singular evolution equations is the main technical tool used in the present work.

In the literature, there are a number of studies concerning long-time existence of solutions to PDEs.  The long-time existence results of the studies focused on water waves \cite{Ming2012,Saut2012,Saut2017} have been also used for the rigorous justification of approximate asymptotical models (such as the Green-Naghdi equations, the shallow water equations and the Boussinesq system) starting from the Euler equations describing the motion of an inviscid, incompressible fluid. In addition to the studies about asymptotic models of water waves, there are also studies presenting the rigorous derivation of  various asymptotic models for nonlinear elastic waves in the long-wave-small-amplitude regime (for instance we refer the reader to \cite{Erbay2016} where  the Camassa-Holm equation and (\ref{nw}) are compared). The present research is motivated by the long-time existence results that were reported for water waves and aims to extend those results to elastic waves propagating in nonlocal elastic solids. In a previous work \cite{Erbay2016}, for smooth kernels and quadratic nonlinearity ($p=1$) we rigorously established  that, in the long-wave-small-amplitude regime, unidirectional solutions of (\ref{nw})  tend to associated solutions of Korteweg-de Vries-type, or Benjamin-Bona-Mahony-type or Camassa-Holm-type equations depending on  the balance between dispersion and nonlinearity. Providing a precise control of the approximation error through the justification process, we showed that those unidirectional asymptotic models are good approximations for (\ref{nw}) on time intervals of order $1/\epsilon$. There we used the fact that solutions of the unidirectional asymptotic models live over a long-time scale \cite{Constantin2009}.  Hence the corresponding solution of the nonlocal wave equation will live on a sufficiently large  time interval. As future work, we plan to investigate  similar comparison results between two nonlocal equations. The long-time existence result and uniform bound obtained in this study will be used in a future work to explore comparison of nonlocal equations.

The structure of the paper is as follows. Section \ref{sec2} is devoted to preliminaries, where  (\ref{nw}) is converted into a system of first-order equations, and  some function spaces are introduced. In Section \ref{sec3}, we consider a related linear system and derive the estimates to be used in the next section. In Section \ref{sec4} we consider the Nash-Moser approach of \cite{Alvarez2008} and  prove that the required assumptions in the hypothesis of the existence theorem in \cite{Alvarez2008} are satisfied in our case. Finally, in the last section we present our long-time existence result and discuss some particular cases.

Throughout this paper, we use the standard notation for function spaces. The Fourier transform $\widehat u$ of $u$ is defined by $\widehat u(\xi)=\int_\mathbb{R} u(x) e^{-i\xi x}dx$. We  also use  $\mathcal{F}$ and  $\mathcal{F}^{-1}$ to denote the Fourier transform and the inverse Fourier transform.  The $L^p(\mathbb{R})$  norm  is denoted by $\Vert u\Vert_{L^p}$ and the symbol $\langle u, v\rangle_{L^{2}}$ denotes the inner product of $u$ and $v$ in $L^2$.  $H^{s}=H^s(\mathbb{R})$  denotes the $L^{2}$-based Sobolev space of order $s$ on $\mathbb{R}$, with the norm $\Vert u\Vert_{H^{s}}=\left(\int_\mathbb{R} (1+\xi^2)^s \vert\widehat u(\xi)\vert^2 d\xi \right)^{1/2}$.  $C$ is a generic positive constant. Partial differentiations are denoted by $D_{x}$ etc.

\section{Preliminaries}\label{sec2}

In this section, we recast (\ref{nw}) as an appropriate first-order system, recall the local existence theorem from \cite{Duruk2010} and introduce function spaces that will be used in the paper. For the rest of this work, we assume that the kernel is an integrable function (or more generally a finite measure) satisfying the nonnegativity and boundedness condition
\begin{equation}
0\leq \widehat{\beta}(\xi)\leq C. \label{betabound}
\end{equation}

We first convert the Cauchy problem (\ref{nw})-(\ref{ini}) to
 \begin{eqnarray}
    && u_{t}=K v_{x}, \hspace*{78pt} u(x,0)=u_{0}(x), \label{systema} \\
    && v_{t} =K u_x + \epsilon^p K\big(u^{p+1}\big)_x, ~~~~ v(x,0)=v_{0}(x) \label{systemb}
\end{eqnarray}
 by writing the nonlocal equation (\ref{nw}) as a first-order system and introducing the pseudo-differential operator
\begin{equation}
    K w(x)=\mathcal{F}^{-1}\Big( \sqrt{ \widehat{\beta }(\xi )} \widehat{w}(\xi )\Big),  \label{K1}
\end{equation}
for which  $K^2 w=\beta*w$. We note that, by (\ref{betabound}), $K$ is a bounded operator on $H^{s}$. Clearly, for the choice $u_{1}=K(v_{0})_{x}$ the initial-value problem (\ref{nw})-(\ref{ini})  reduces to the first-order system (\ref{systema})-(\ref{systemb}).

 The local well-posedness of the Cauchy problem (\ref{nw})-(\ref{ini}), equivalently the local-well posedness of   (\ref{systema})-(\ref{systemb}), was proved in \cite{Duruk2010} under the  regularity assumption
\begin{equation}
    0\leq \widehat{\beta }(\xi )\leq C (1+\xi^2)^{-r/2},~\quad r\geq 2. \label{order}
\end{equation}
In such a case, $K$ is an operator of order $-r/2$ and hence maps $H^{s}$ into $H^{s+\frac{r}{2}}\subset H^{s}$. For $r\geq 2$, due to the regularizing effect, both (\ref{nw})-(\ref{ini}) and (\ref{systema})-(\ref{systemb}) are the initial-value problems defined for $H^{s}$-valued ODE's. Consequently, the  Cauchy problems are locally well-posed with solutions in $C^{1}\big([0,T],H^{s}\big)$ for some $T>0$. Obviously, the parameter $r$ in (\ref{order}) is a measure of the smoothness of $\beta$ and hence the regularizing effect of the convolution operator. Namely,   the regularizing effect  increases as the decay rate $r$ gets larger. In \cite{Duruk2010} it was shown that, when $r\geq2$, a possible finite-time blow-up of solutions will be controlled by the $L^{\infty}$-norm.  On the other hand, when the smoothing effect of $\beta$ is weaker,  solutions may behave in a nonlinear hyperbolic manner and  may evolve to breaking.

In the present work we extend the local well-posedness result  proved for the case $r\geq 2$ in \cite{Duruk2010} to the case $r\geq 0$ and show long-time existence in both cases. Indeed, the existence of two different intervals for $r$ can be attributed to the dual nature of (\ref{nw}). Due to the existence of second-order spatial derivative in  (\ref{nw}), $r=2$  is the threshold value determining whether (\ref{nw}) behaves  like  a hyperbolic equation or an ODE.  Here we merely state that the well-posedness question of (\ref{nw})-(\ref{ini}) for the case $0 \leq r <2$ requires the techniques that are different from those in \cite{Duruk2010}. To prove the long-time existence of the solution we need  suitable energy estimates. In our problem, even for the regularized case, the energy estimates involve a loss of derivatives. To overcome the loss of derivatives we use a Nash-Moser-type approach. This   enables us to extend the existence result to the hyperbolic regime. In particular, we follow the approach given in \cite{Alvarez2008} and prove that a unique solution to (\ref{nw})-(\ref{ini}) exists over long-time scales of order $1/\epsilon^p$ in appropriate function spaces.

We now define the function spaces  that will be used  in  this work. For a fixed time $T$ let
\begin{displaymath}
        H_{\epsilon (0)}^{s} =C\big([0,{\frac{T}{\epsilon ^{p}}}];H^{s}\big),\quad H_{\epsilon (1)}^{s}=C\big([0,{\frac{T}{\epsilon ^{p}}}];H^{s}\big)\cap C^{1}\big([0,{\frac{T}{\epsilon ^{p}}}];H^{s-1}\big),
\end{displaymath}%
    with norms
\begin{displaymath}
        \Vert u\Vert _{H_{\epsilon(0)}^{s}} =\sup_{t\in \lbrack 0,{\frac{T}{\epsilon ^{p}}}]}\Vert u(t)\Vert _{H^{s}},~~~~ \Vert u\Vert _{H_{\epsilon(1)}^{s}}=\sup_{t\in \lbrack 0,{\frac{T}{\epsilon ^{p}}}]}\big(\Vert u(t)\Vert _{H^{s}}+\Vert u_{t}(t)\Vert _{H^{s-1}}\big).
\end{displaymath}
We  also introduce the vector-valued counterparts
\begin{eqnarray*}
       && X_{\epsilon (0)}^{s}=C\big([0,{\frac{T}{\epsilon ^{p}}}];X^{s}\big), \hspace*{40pt}
        X_{\epsilon (1)}^{s}=C\big([0,{\frac{T}{\epsilon ^{p}}}];X^{s}\big)\cap C^{1}\big([0,{\frac{T}{\epsilon ^{p}}}];X^{s-1}\big), \\
       && \Vert \mathbf{u}\Vert _{X_{\epsilon(0)}^{s}}=\sup_{t\in [0,{\frac{T}{\epsilon ^{p}}}]}\Vert \mathbf{u}(t)\Vert _{X^{s}},           ~~~~~
        \Vert \mathbf{u}\Vert _{X_{\epsilon(1)}^{s}} =\sup_{t\in [0,{\frac{T}{\epsilon ^{p}}}]}\big(\Vert \mathbf{u}(t)\Vert _{X^{s}}+\Vert \mathbf{u}_{t}(t)\Vert _{X^{s-1}}\big),
\end{eqnarray*}
where
\begin{displaymath}
        X^{s} =H^{s}\times H^{s},~~~~~ \Vert \mathbf{u}\Vert _{X^{s}} =\Vert (u,v)\Vert _{X^{s}}=\Vert u\Vert _{H^{s}}+\Vert v\Vert _{H^{s}}.
\end{displaymath}
Note that these spaces depend on two parameters $T$ and $\epsilon$, but, to simplify the notation, we have suppressed the index $T$.

\section{Energy estimates for a related linear system}\label{sec3}

As a starting point, we consider  the following initial-value problem
\begin{eqnarray}
    && u_{t}=Kv_{x}+\epsilon ^{p}f_{1},\hspace*{69pt} u(x,0)=g_{1}(x), \label{l1} \\
    && v_{t}=Ku_{x}+\epsilon ^{p}K(wu)_{x}+\epsilon ^{p}f_{2},~~~~  v(x,0)=g_{2}(x), \label{l2}
\end{eqnarray}
defined for a nonhomogeneous linear system of differential equations associated with (\ref{systema})-(\ref{systemb}),  where $w$, $f_{i}$ and $g_{i}$ ($i=1,2$) are fixed, given and sufficiently smooth functions. In this section we obtain  a priori estimates for solutions of (\ref{l1})-(\ref{l2}) over the long-time intervals  $[0,{\frac{T}{\epsilon ^{p}}}]$.

We begin by defining the energy functional for the linear system (\ref{l1})-(\ref{l2}) as follows
\begin{equation}
    E_{s}^{2}(t)=\frac{1}{2}\big( \Vert u(t)\Vert_{H^{s}}^{2}+\Vert v(t)\Vert_{H^{s}}^{2}+\epsilon ^{p}\langle u(t),w(t)u(t)\rangle_{H^{s}}\big)  \label{ener}
\end{equation}%
where we have made use of the representation  $\langle u,v\rangle _{H^{s}}=\langle \Lambda ^{s}u,\Lambda^{s}v\rangle _{L^{2}}$ with $\Lambda ^{2s}=(1-D_{x}^{2})^{s}$. The following lemma states that, under some smallness assumption on $\epsilon $, the energy functional  $E_{s}$ is uniformly equivalent to the $X^{s}$ norm of the vector $\mathbf{u}=(u,v)$.
\begin{lemma}\label{lem3.1}
    Let $w\in H_{\epsilon(0)}^{s}$. Then there is some $\epsilon _{0}>0$ so that, for all $0<\epsilon \leq \epsilon _{0}$, $~ E_{s}(t)$ is uniformly equivalent to $\Vert \mathbf{u}(t)\Vert _{X^{s}}$.
\end{lemma}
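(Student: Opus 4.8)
The strategy is to show that the cross term $\epsilon^p\langle u(t),w(t)u(t)\rangle_{H^s}$ in the energy functional is a lower-order perturbation of the two positive quadratic terms, so that $E_s^2(t)$ is sandwiched between fixed multiples of $\tfrac12\Vert\mathbf u(t)\Vert_{X^s}^2$ once $\epsilon$ is small. Writing the cross term as $\langle\Lambda^s u,\Lambda^s(wu)\rangle_{L^2}$, the key is a bound of the form
\begin{displaymath}
    \big|\langle u,wu\rangle_{H^s}\big|\leq C\,\Vert w\Vert_{H^s}\,\Vert u\Vert_{H^s}^2,
\end{displaymath}
which follows from the fact that $H^s(\mathbb R)$ is a Banach algebra for $s>1/2$ (so $\Vert wu\Vert_{H^s}\leq C\Vert w\Vert_{H^s}\Vert u\Vert_{H^s}$) together with Cauchy--Schwarz; implicitly one assumes $s>1/2$, which is the natural regularity range here.

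First I would fix $t\in[0,T/\epsilon^p]$ and set $M=\Vert w\Vert_{H^s_{\epsilon(0)}}=\sup_t\Vert w(t)\Vert_{H^s}$, which is finite by hypothesis. Using the algebra estimate above,
\begin{displaymath}
    \big|\epsilon^p\langle u(t),w(t)u(t)\rangle_{H^s}\big|\leq C\epsilon^p M\,\Vert u(t)\Vert_{H^s}^2\leq C\epsilon^p M\,\Vert\mathbf u(t)\Vert_{X^s}^2.
\end{displaymath}
Then from the definition \eqref{ener},
\begin{displaymath}
    \tfrac14\big(\Vert u(t)\Vert_{H^s}^2+\Vert v(t)\Vert_{H^s}^2\big)\big(1-C\epsilon^p M\big)\ \text{bounds below, and the analogous expression with}\ (1+C\epsilon^p M)\ \text{bounds above},
\end{displaymath}
so it suffices to choose $\epsilon_0>0$ with $C\epsilon_0^p M\leq 1/2$; then for all $0<\epsilon\leq\epsilon_0$ one gets, say, $\tfrac14\Vert\mathbf u(t)\Vert_{X^s}^2\lesssim E_s^2(t)\lesssim \Vert\mathbf u(t)\Vert_{X^s}^2$ uniformly in $t$ and in $\epsilon$, using also the elementary equivalence of $\Vert u\Vert_{H^s}^2+\Vert v\Vert_{H^s}^2$ with $(\Vert u\Vert_{H^s}+\Vert v\Vert_{H^s})^2=\Vert\mathbf u\Vert_{X^s}^2$.

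There is no serious obstacle here; the only points requiring care are (i) making explicit that the constant $C$ and the threshold $\epsilon_0$ depend only on $s$ and on $M=\Vert w\Vert_{H^s_{\epsilon(0)}}$, not on $t$ or $\epsilon$ — this is exactly the uniformity claimed in the statement — and (ii) recording the standing assumption $s>1/2$ needed for the Banach algebra property of $H^s(\mathbb R)$. I would state the two-sided inequality with explicit constants (e.g. $\tfrac14\Vert\mathbf u(t)\Vert_{X^s}^2\le E_s^2(t)\le\Vert\mathbf u(t)\Vert_{X^s}^2$) so that it can be quoted directly in the energy estimates of the next section.
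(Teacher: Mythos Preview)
Your proposal is correct and follows essentially the same approach as the paper: both use the Banach algebra property of $H^s$ for $s>1/2$ to bound $|\langle u,wu\rangle_{H^s}|\leq C\Vert w\Vert_{H^s}\Vert u\Vert_{H^s}^2$, then choose $\epsilon_0$ so that $C\epsilon_0^p\Vert w\Vert_{H^s_{\epsilon(0)}}\leq \tfrac12$, and conclude the two-sided equivalence via the elementary comparison of $\Vert u\Vert_{H^s}^2+\Vert v\Vert_{H^s}^2$ with $\Vert\mathbf u\Vert_{X^s}^2$. The paper records the explicit constants $\tfrac{1}{2\sqrt{2}}$ and $\tfrac{\sqrt3}{2}$ for the equivalence with $\Vert\mathbf u\Vert_{X^s}$, but otherwise the arguments coincide.
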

\begin{proof}
    Recall that, for $s>1/2$, $H^{s}$ is an algebra:
    \begin{displaymath}
    |\langle u(t),w(t)u(t)\rangle _{H^{s}}|\leq C\Vert u(t)\Vert_{H^{s}}^{2}\Vert w(t)\Vert _{H^{s}}.
    \end{displaymath}
     For
    \begin{displaymath}
        \epsilon ^{p}\leq \frac{1}{2C\Vert w\Vert _{H_{\epsilon (0)}^{s}}}=\epsilon _{0}^{p},
    \end{displaymath}%
    we have
    \begin{displaymath}
        -\frac{1}{2}\Vert u(t)\Vert _{H^{s}}^{2}\leq \epsilon ^{p}\langle u(t),w(t)u(t)\rangle _{H^{s}}
            \leq \frac{1}{2}\Vert u(t)\Vert _{H^{s}}^{2}.
    \end{displaymath}%
    Using  this and (\ref{ener}) we get
    \begin{displaymath}
        \frac{1}{2}\big(\Vert u(t)\Vert _{H^{s}}^{2}+\Vert v(t)\Vert _{H^{s}}^{2}-\frac{1}{2}\Vert u(t)\Vert _{H^{s}}^{2}\big)
        \leq E_{s}^{2}(t)
        \leq \frac{1}{2}\big(\Vert u(t)\Vert _{H^{s}}^{2}+\Vert v(t)\Vert _{H^{s}}^{2}+\frac{1}{2}\Vert u(t)\Vert _{H^{s}}^{2}\big)
    \end{displaymath}
    from which it follows that
     \begin{equation}
        {1\over 2\sqrt{2}}\big(\Vert u(t)\Vert _{H^{s}}+\Vert v(t)\Vert _{H^{s}}\big)
        \leq E_{s}(t)
        \leq {\sqrt{3}\over 2}\big(\Vert u(t)\Vert _{H^{s}}+\Vert v(t)\Vert _{H^{s}}\big). \label{lin-en}
    \end{equation}
\end{proof}
\begin{remark}
    Lemma \ref{lem3.1} shows that   if $\epsilon < \epsilon_{0}$, then $\epsilon^{p} \Vert w(t)\Vert _{H^{s}}$ is small. This  in turn implies the usual hyperbolicity condition $1+\epsilon^{p} w>0$ for (\ref{l1})-(\ref{l2}).
\end{remark}

Next we state a lemma about the product and commutator estimates that will be needed in the sequel. It corresponds to Lemma 4.6 of \cite{Alvarez2008} and provides a classical Moser tame product estimate and a particular case of Kato-Ponce commutator estimate:
\begin{lemma}
    Let $s>s_0>1/2$.
    \begin{enumerate}
        \item  For all $f, g\in H^s(\mathbb{R})$, one has
            \begin{equation}
                \Vert fg\Vert_{H^s} \leq  C \big( \Vert f\Vert_{H^{s_0}} \Vert g\Vert_{H^s}
                + \Vert f\Vert_{H^s}\Vert g\Vert_{H^{s_0}}\big).  \label{moser}
            \end{equation}
        \item  Let $r\in \mathbb{R}$ be such that $~ -s_{0}<r\leq s_{0}+1$. For all $f\in H^{s_{0}+1}(\mathbb{R})\cap H^{s+r}(\mathbb{R})$ and $u\in H^{s+r-1} (\mathbb{R})$,
            \begin{equation}
               \big \Vert [\Lambda^s, f]u\big\Vert_{H^r} \leq C\big(\Vert f_x\Vert_{H^{s_0}} \Vert u\Vert_{H^{s+r-1}} + \Vert f_x\Vert_{H^{s+r-1}}\Vert u\Vert_{H^{s_0}}\big).  \label{kato}
            \end{equation}
        \end{enumerate}
\end{lemma}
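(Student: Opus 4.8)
The plan is to prove both estimates by passing to the Fourier side and reducing each to Young's convolution inequality $\Vert a\ast b\Vert_{L^{2}}\le\Vert a\Vert_{L^{2}}\Vert b\Vert_{L^{1}}$, combined with the elementary bound
\begin{displaymath}
\Vert\widehat h\Vert_{L^{1}}\le\Big(\int_{\mathbb{R}}(1+\xi^{2})^{-s_{0}}\,d\xi\Big)^{1/2}\Vert h\Vert_{H^{s_{0}}}\le C\Vert h\Vert_{H^{s_{0}}},
\end{displaymath}
which is exactly where the hypothesis $s_{0}>1/2$ enters: it makes the weight $(1+\xi^{2})^{-s_{0}}$ integrable. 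Throughout I write $\langle\xi\rangle=(1+\xi^{2})^{1/2}$ and use Peetre's inequality in the forms $\langle\xi\rangle^{\sigma}\le C\big(\langle\xi-\eta\rangle^{\sigma}+\langle\eta\rangle^{\sigma}\big)$ for $\sigma\ge0$ and $\langle\xi\rangle^{\sigma}\le C\langle\xi-\eta\rangle^{|\sigma|}\langle\eta\rangle^{\sigma}$ for all real $\sigma$ (and its mirror image with the roles of $\xi-\eta$ and $\eta$ exchanged).

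For the Moser estimate (\ref{moser}) I would start from $\mathcal{F}\big(\Lambda^{s}(fg)\big)(\xi)=\langle\xi\rangle^{s}\big(\widehat f\ast\widehat g\big)(\xi)$, and split $\langle\xi\rangle^{s}$ by Peetre's inequality to obtain the pointwise bound
\begin{displaymath}
\big|\mathcal{F}\big(\Lambda^{s}(fg)\big)\big|\le C\Big(\big|\mathcal{F}(\Lambda^{s}f)\big|\ast|\widehat g|+|\widehat f|\ast\big|\mathcal{F}(\Lambda^{s}g)\big|\Big).
\end{displaymath}
Taking $L^{2}$ norms, applying Young's inequality to each convolution and estimating the undifferentiated factor by the $L^{1}$-bound above gives $\Vert fg\Vert_{H^{s}}\le C\big(\Vert f\Vert_{H^{s}}\Vert g\Vert_{H^{s_{0}}}+\Vert f\Vert_{H^{s_{0}}}\Vert g\Vert_{H^{s}}\big)$, which is (\ref{moser}).

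For the commutator estimate (\ref{kato}) I would follow the classical Kato--Ponce scheme. Writing $[\Lambda^{s},f]u=\Lambda^{s}(fu)-f\Lambda^{s}u$ gives $\mathcal{F}\big([\Lambda^{s},f]u\big)(\xi)=\int_{\mathbb{R}}\big(\langle\xi\rangle^{s}-\langle\eta\rangle^{s}\big)\widehat f(\xi-\eta)\,\widehat u(\eta)\,d\eta$. The key is the symbol estimate, obtained from $\langle\xi\rangle^{s}-\langle\eta\rangle^{s}=\int_{0}^{1}\nabla(\langle\cdot\rangle^{s})\big(\eta+t(\xi-\eta)\big)\cdot(\xi-\eta)\,dt$ together with $|\nabla(\langle\cdot\rangle^{s})(\zeta)|\le s\langle\zeta\rangle^{s-1}$ and $\langle\eta+t(\xi-\eta)\rangle\le\max(\langle\xi\rangle,\langle\eta\rangle)$, namely
\begin{displaymath}
\big|\langle\xi\rangle^{s}-\langle\eta\rangle^{s}\big|\le C\,|\xi-\eta|\,\big(\langle\xi-\eta\rangle^{s-1}+\langle\eta\rangle^{s-1}\big)
\end{displaymath}
(when $s<1$ the exponent $s-1$ is negative and may simply be dropped, giving $\big|\langle\xi\rangle^{s}-\langle\eta\rangle^{s}\big|\le C|\xi-\eta|$). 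Since $|\xi-\eta|\,|\widehat f(\xi-\eta)|=|\widehat{f_{x}}(\xi-\eta)|$, this is precisely what converts the commutator into a product involving $f_{x}$, and hence why $f_{x}$ appears on the right-hand side of (\ref{kato}). Inserting the weight $\langle\xi\rangle^{r}$ and redistributing it by Peetre's inequality, one is left with a finite sum of convolution expressions of exactly the type handled in part (1); applying Young's inequality and the $L^{1}$-embedding to each — in every term placing the low-regularity norm $H^{s_{0}}$ on the factor whose residual frequency weight is square-integrable and the high-regularity norm $H^{s+r-1}$ on the other — produces the two terms $\Vert f_{x}\Vert_{H^{s_{0}}}\Vert u\Vert_{H^{s+r-1}}$ and $\Vert f_{x}\Vert_{H^{s+r-1}}\Vert u\Vert_{H^{s_{0}}}$.

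The main obstacle is the bookkeeping in part (2): one must decompose the $(\xi,\eta)$-integral according to whether $|\xi-\eta|$ or $|\eta|$ is the larger frequency, track which factor carries the surviving weight in each of the several resulting pieces, and check that the leftover exponent on the undifferentiated factor is always strictly less than $-1/2$ so that the $L^{1}$-bound applies. This case analysis is exactly where the restriction $-s_{0}<r\le s_{0}+1$ is needed — it is the range of $r$ for which none of the auxiliary weights arising from the two Peetre splittings and the dichotomy $\langle\xi-\eta\rangle^{s-1}$ versus $\langle\eta\rangle^{s-1}$ fails to lie in $L^{2}$; outside this range the argument breaks down. The remaining ingredients — the mean value computation for the symbol, the explicit constants, and the identity $[\Lambda^{s},f]u=\Lambda^{s}(fu)-f\Lambda^{s}u$ — are routine. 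Since the statement is classical (it is Lemma 4.6 of \cite{Alvarez2008}), one may alternatively just invoke that reference; the sketch above indicates the argument for completeness.
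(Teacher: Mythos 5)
The paper does not actually prove this lemma: it simply records it as Lemma 4.6 of \cite{Alvarez2008}, so your fallback option (invoke the reference) is precisely the paper's route, and your part (1) sketch is the standard Peetre--Young--Cauchy--Schwarz argument and is fine.

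However, your sketched elementary proof of part (2) has a genuine gap, and it sits exactly where you claim the hypothesis $-s_0<r\le s_0+1$ saves the day. After the mean-value reduction the kernel to be controlled is $\langle\xi\rangle^{r}\,|\langle\xi\rangle^{s}-\langle\eta\rangle^{s}|/|\xi-\eta|$, and the dangerous configuration is the high-high-to-low interaction $|\eta|\sim|\xi-\eta|\gg\langle\xi\rangle$, where the kernel is genuinely of size $\langle\xi\rangle^{r}\langle\eta\rangle^{s-1}$ with no further cancellation. A pure convolution estimate (Peetre splittings, Young, and the $L^{1}$-embedding, with the two Sobolev norms placed on the input factors) can only use $\sup_{\xi}\langle\xi\rangle^{r}\le 1$ there when $r<0$, so it must split the exponent $s-1$ between the two inputs as $\sigma_{1}+\sigma_{2}$ with one piece below $s_{0}-1/2$ (for the $L^{1}$ factor) and the other at most $s+r-1$ (for the $L^{2}$ factor); this forces $r>1/2-s_{0}$, not $r>-s_{0}$. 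Since $1/2-s_{0}>-s_{0}$ always, the sub-range $-s_{0}<r\le 1/2-s_{0}$ is never covered by the scheme as you describe it: there the estimate is still true, but one must exploit the decay of the output weight $\langle\xi\rangle^{r}$ itself (e.g.\ Cauchy--Schwarz in $\eta$ followed by integrating $\langle\xi\rangle^{2r}$ in $\xi$, with an additional borrowing of weight when $1/2<s_{0}<1$), or argue via Littlewood--Paley/paradifferential calculus as in Lannes' original proof. So the assertion that the stated restriction on $r$ is ``exactly'' what makes the Young-type bookkeeping close is not correct. For the way the lemma is used in this paper (only $r=0$ and $r=1$, both larger than $1/2-s_{0}$), your sketch does suffice; for the lemma as stated, either supply the extra argument for negative $r$ or rely on the citation, as the paper does.
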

In the rest of this work we will always assume that $s_{0}>\frac{1}{2}$.

We are now ready to state and prove an a priori energy estimate for the linear system (\ref{l1})-(\ref{l2}).
\begin{proposition}\label{prop3.3}
    Let $s\geq s_{0}+1$, $~T>0$, $~w\in H_{\epsilon(1)}^{s+1}$, $\mathbf{f}=(f_{1},f_{2})\in X_{\epsilon (0)}^{s}$, $\mathbf{g}=(g_{1},g_{2})\in X^{s}$. Suppose $\mathbf{u}=(u,v)\in X_{\epsilon (0)}^{s}$ satisfies the initial-value problem (\ref{l1})-(\ref{l2}). Then, there is some $\epsilon_{0}$ such that  for all $0<\epsilon < \epsilon_{0}$ and $t\in [0, {T\over \epsilon^{p}}]$
    \begin{equation}
        \Vert\mathbf{u}(t)\Vert_{X^s}
            \leq C\big(T,\Vert w\Vert_{H_{\epsilon(1)}^{s_{0}+2}}\big)  \Big( \mathfrak{I}^s(t,\mathbf{f},\mathbf{g})  +\Vert w\Vert_{H_{\epsilon(1)}^{s+1}}  \mathfrak{I}^{s_{0}+1}(t,\mathbf{f},\mathbf{g})\Big), \label{prop33}
    \end{equation}
    where $\epsilon_{0}$ is determined as in Lemma \ref{lem3.1} and
    \begin{equation}
        \mathfrak{I}^{s}(t,\mathbf{f},\mathbf{g})
        =\Vert \mathbf{g}\Vert_{X^{s}}+\int_{0}^{t}\sup_{0\leq t^{\prime} \leq t^{\prime\prime}}\Vert \mathbf{f}(t^{\prime})\Vert _{X^{s}}dt^{\prime\prime} . \label{prodef}
    \end{equation}
\end{proposition}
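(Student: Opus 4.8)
The plan is to run a symmetric--hyperbolic energy estimate for the modified energy $E_s$ of \eqref{ener} and then transfer the conclusion to the $X^s$ norm via Lemma~\ref{lem3.1}. First I would fix $\epsilon\le\epsilon_0$, where $\epsilon_0$ is chosen as in Lemma~\ref{lem3.1} but with $\Vert w\Vert_{H_{\epsilon(0)}^{s}}$ replaced by a fixed low norm of $w$ so that the equivalence constants in \eqref{lin-en} are uniform, and differentiate $E_s^2(t)$ along a solution of \eqref{l1}--\eqref{l2}, writing $\langle\cdot,\cdot\rangle_{H^s}=\langle\Lambda^s\cdot,\Lambda^s\cdot\rangle_{L^2}$. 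Since $K$, $D_x$ and $\Lambda^s$ are Fourier multipliers they commute, and the symbol $i\xi\sqrt{\widehat{\beta}(\xi)}$ of $KD_x$ is purely imaginary and odd, so $KD_x$ is skew-adjoint on $L^2$; hence the two principal contributions $\langle\Lambda^s u,KD_x\Lambda^s v\rangle_{L^2}$ and $\langle\Lambda^s v,KD_x\Lambda^s u\rangle_{L^2}$ cancel. After expanding $\Lambda^s(wu)=w\Lambda^s u+[\Lambda^s,w]u$ and $\Lambda^s(wKv_x)=wKD_x\Lambda^s v+[\Lambda^s,w]Kv_x$ in the terms coming from $\epsilon^pK(wu)_x$ and from the symmetrizer $\epsilon^p\langle u,wu\rangle_{H^s}$, the remaining top-order pieces $\epsilon^p\langle wKD_x\Lambda^s v,\Lambda^s u\rangle_{L^2}$ cancel as well, leaving only lower-order terms. (These manipulations need more time-regularity than $\mathbf{u}\in X_{\epsilon(0)}^{s}$ provides; as usual one first performs them for smooth $\mathbf{u}$ --- or after a Friedrichs mollification --- and passes to the limit, which I suppress.)

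Every surviving term in $\tfrac{d}{dt}E_s^2(t)$ carries an explicit factor $\epsilon^p$: commutator terms of the form $\epsilon^p\langle\Lambda^s v,KD_x[\Lambda^s,w]u\rangle_{L^2}$ and $\epsilon^p\langle\Lambda^s u,[\Lambda^s,w]Kv_x\rangle_{L^2}$; the symmetrizer term $\epsilon^p\langle\Lambda^s u,\Lambda^s(w_tu)\rangle_{L^2}$; the source terms $\epsilon^p\langle\Lambda^s u,\Lambda^s f_1\rangle_{L^2}+\epsilon^p\langle\Lambda^s v,\Lambda^s f_2\rangle_{L^2}$; and harmless $\epsilon^{2p}$ contributions involving $wf_1$ and $wu$. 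For the commutators I would move $KD_x$ onto the bracket, use boundedness of $K$ and the Kato--Ponce estimate \eqref{kato} (with $r=1$, resp.\ $r=0$; admissible since $s_0>1/2$); for the symmetrizer term the Moser estimate \eqref{moser}; for the rest Cauchy--Schwarz. Using also $\Vert w_t\Vert_{H^{s}}\le\Vert w\Vert_{H_{\epsilon(1)}^{s+1}}$, $\Vert w_t\Vert_{H^{s_0}}\le\Vert w\Vert_{H_{\epsilon(1)}^{s_0+1}}$, $\Vert Kv_x\Vert_{H^{s-1}}\le C\Vert v\Vert_{H^{s}}$ and Lemma~\ref{lem3.1}, this yields, for some nondecreasing $C(\cdot)$,
\[
\frac{d}{dt}E_s^2(t)\le \epsilon^p\,C\big(\Vert w\Vert_{H_{\epsilon(1)}^{s_0+2}}\big)\Big(E_s^2(t)+\Vert w\Vert_{H_{\epsilon(1)}^{s+1}}\,\Vert\mathbf{u}\Vert_{X_{\epsilon(0)}^{s_0+1}}\,E_s(t)+\Vert\mathbf{f}(t)\Vert_{X^s}\,E_s(t)\Big).
\]
Crucially, the only place the top norm $\Vert w\Vert_{H^{s+1}}$ enters, it multiplies the \emph{low} norm $\Vert\mathbf{u}\Vert_{X^{s_0+1}}$ --- this is the loss of derivatives in concrete form.

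Dividing by $E_s(t)$ (after the standard $\sqrt{E_s^2+\delta^2}$ regularization, then $\delta\to0$) and applying Gronwall's inequality on $[0,\tfrac{T}{\epsilon^p}]$, the exponential prefactor becomes $\exp(\epsilon^p C\cdot\tfrac{T}{\epsilon^p})=\exp(CT)$: the smallness of $\epsilon$ exactly compensates the length of the time interval, which is what keeps all constants uniform in $\epsilon$. Similarly $\epsilon^p\int_0^t\Vert\mathbf{f}(\tau)\Vert_{X^s}\,d\tau\le\int_0^t\sup_{0\le t'\le t''}\Vert\mathbf{f}(t')\Vert_{X^s}\,dt''$ since $\epsilon^p<1$, and $\epsilon^p t\le T$, so the right-hand side assembles into $\mathfrak{I}^{s}(t,\mathbf{f},\mathbf{g})$ and $\Vert w\Vert_{H_{\epsilon(1)}^{s+1}}\,\mathfrak{I}^{s_0+1}(t,\mathbf{f},\mathbf{g})$, with $E_s(0)\le C\Vert\mathbf{g}\Vert_{X^s}$ accounting for the initial data. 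One runs the argument twice. First with $s=s_0+1$: the factor $\Vert\mathbf{u}\Vert_{X_{\epsilon(0)}^{s_0+1}}$ on the right is then $\le CE_{s_0+1}$ and is absorbed into the $E_{s_0+1}^2$ term, giving the self-contained base estimate $\Vert\mathbf{u}\Vert_{X_{\epsilon(0)}^{s_0+1}}\le C\big(T,\Vert w\Vert_{H_{\epsilon(1)}^{s_0+2}}\big)\,\mathfrak{I}^{s_0+1}$. Then for general $s\ge s_0+1$ one substitutes this base estimate for $\Vert\mathbf{u}\Vert_{X_{\epsilon(0)}^{s_0+1}}$ on the right, obtaining \eqref{prop33}.

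I expect the main obstacle to be the loss of derivatives itself: because the commutator and product estimates unavoidably produce a term with $\Vert w\Vert_{H^{s+1}}$, one order above the regularity of $\mathbf{u}$, a direct closure fails; the argument succeeds only because that high norm of $w$ can be paired with a fixed low norm of $\mathbf{u}$, making the estimate \emph{tame} and enabling the two-tier bootstrap above. Correctly apportioning the high index to $w$ --- never to $\mathbf{u}$ --- in every bilinear and commutator estimate is the delicate bookkeeping, and it is precisely the structure (linear in the high norms, constant depending only on a low norm) demanded by the Nash--Moser scheme of \cite{Alvarez2008} used later. A minor additional point is justifying the energy identity when $\mathbf{u}$ is only in $X_{\epsilon(0)}^{s}$, handled by the mollification noted above.
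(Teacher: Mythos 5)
Your proposal is correct and follows essentially the same route as the paper: the same modified energy $E_s$ with the symmetrizer $\epsilon^p\langle u,wu\rangle_{H^s}$, cancellation of the principal terms by skew-adjointness of $KD_x$, Kato--Ponce commutator and Moser product estimates yielding a tame differential inequality in which $\Vert w\Vert_{H^{s+1}}$ multiplies only low norms of $\mathbf{u}$, and a two-tier Gronwall argument (base case $s=s_0+1$, then general $s$) over $[0,T/\epsilon^p]$. The only cosmetic difference is that the paper effects the top-order cancellation by substituting $u_t=Kv_x+\epsilon^p f_1$ from \eqref{l1} into $\frac{d}{dt}\langle u,wu\rangle_{H^s}$, arriving at the antisymmetric combination $\langle u,wu_t\rangle_{H^s}-\langle u_t,wu\rangle_{H^s}$, whereas you expand the commutators $[\Lambda^s,w]$ directly; the resulting estimates are identical.
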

\begin{proof}
    Taking the $L^{2}$ inner product of (\ref{l1}) with $\Lambda^{2s}u$, (\ref{l2}) with $\Lambda ^{2s}v$ and adding them up yields
    \begin{eqnarray}
        {\frac{d}{dt}}E_{s}^{2}(t)
        &=&\epsilon ^{p}\Big (\langle u,f_{1}\rangle _{H^{s}}+\langle v,f_{2}\rangle _{H^{s}}\Big)
            +\epsilon^{p}\langle v,K(wu)_{x}\rangle_{H^{s}}
            +{\frac{\epsilon ^{p}}{2}}{\frac{d}{dt}}\langle u,wu\rangle _{H^{s}}  \nonumber  \\
        &=&\epsilon ^{p}\Big (\langle u,f_{1}\rangle _{H^{s}}+\langle v,f_{2}\rangle_{H^{s}}\Big)
            -\epsilon ^{p}\langle Kv_{x},wu\rangle _{H^{s}}  \nonumber \\
        && +{\frac{\epsilon ^{p}}{2}}\Big (\langle u_{t},wu\rangle _{H^{s}}+\langle u,w_{t}u\rangle _{H^{s}}
            +\langle u,wu_{t}\rangle _{H^{s}}\Big )  \nonumber \\
        &=&\epsilon^{p}\Big (\langle u,f_{1}\rangle _{H^{s}}+\langle v,f_{2}\rangle_{H^{s}}\Big)
            +\epsilon ^{2p}\langle f_{1},wu\rangle _{H^{s}}
            +{\frac{\epsilon^{p}}{2}}\langle u,w_{t}u\rangle _{H^{s}}  \nonumber \\
        && +{\frac{\epsilon ^{p}}{2}}\Big (\langle u,wu_{t}\rangle _{H^{s}}
            -\langle u_{t},wu\rangle _{H^{s}}\Big ),  \label{en}
    \end{eqnarray}%
    where we have used (\ref{l1}) and (\ref{ener}). We  now estimate the terms in (\ref{en}). The first and second terms on the right-hand side of (\ref{en}) are estimated as
    \begin{equation}
        \big| \langle u,f_{1}\rangle _{H^{s}}+\langle v,f_{2}\rangle_{H^{s}}\big|
            \leq C\big(\Vert f_{1}\Vert _{H^{s}}\Vert u\Vert _{H^{s}}+\Vert f_{2}\Vert _{H^{s}}\Vert v\Vert _{H^{s}}\big), \label{term1}
    \end{equation}
    and
    \begin{equation}
        \big| \langle f_{1},wu\rangle _{H^{s}}\big|
            \leq C\Vert f_{1}\Vert _{H^{s}}\Vert w\Vert _{H^{s}}\Vert u\Vert _{H^{s}}, \label{term2}
    \end{equation}
    respectively.     Using the Cauchy-Schwarz inequality and the  estimate (\ref{moser}), we get
    \begin{equation}
        \big| \langle u,w_{t}u\rangle _{H^{s}}\big|
        \leq C\big (\Vert u\Vert _{H^{s}}^{2}\Vert w_{t}\Vert _{H^{s_{0}}}
            +\Vert u\Vert _{H^{s}}\Vert u\Vert _{H^{s_{0}}}\Vert w_{t}\Vert _{H^{s}}\big)  \label{term3}
    \end{equation}%
    for the third term on the right-hand side of (\ref{en}).     Similarly, the use of  the Cauchy-Schwarz inequality and the commutator   $[\Lambda ^{s},w]f=\Lambda ^{s}(wf)-w\Lambda ^{s}f$ makes possible to write the last term  in (\ref{en}) as
    \begin{equation}
       \big| \langle u,wu_{t}\rangle _{H^{s}}-\langle u_{t},wu\rangle _{H^{s}}\big|
        \leq  C \big(\Vert \Lambda^{s}u\Vert_{L^{2}}\Vert \lbrack \Lambda^{s},w\rbrack u_{t}\Vert_{L^{2}}
            +\Vert \Lambda^{s-1}u_{t}\Vert_{L^{2}}\Vert\Lambda \lbrack \Lambda^{s},w \rbrack u\Vert_{L^{2}}\big). \label{last}
    \end{equation}
    Applying the Kato-Ponce commutator estimate (\ref{kato}) to the terms $\Vert \Lambda \lbrack \Lambda ^{s},w]u\Vert_{L^{2}}$ and  $\Vert \lbrack \Lambda^{s},w]u_{t}\Vert _{L^{2}}$  in (\ref{last}), we get
    \begin{eqnarray}
    \Vert \Lambda \lbrack \Lambda^{s},w]u\Vert_{L^{2}}
            &\leq & C \big (\Vert u\Vert_{H^{s}}\Vert w\Vert_{H^{s_{0}+1}}
            +\Vert u\Vert_{H^{s_{0}}}\Vert w_{x}\Vert_{H^{s}}\big), \label{fff} \\
    \Vert \lbrack \Lambda^{s},w]u_{t}\Vert _{L^{2}}
            &\leq &  C \big (\Vert u_{t}\Vert_{H^{s-1}}\Vert w_{x}\Vert_{H^{s_{0}}}
            +\Vert u_{t}\Vert_{H^{s_{0}}}\Vert w_{x}\Vert _{H^{s-1}}\big).    \label{sss}
    \end{eqnarray}
    Substituting  (\ref{fff}) and (\ref{sss}) into (\ref{last}) and using (\ref{l1}) to eliminate $u_{t}$ in the resulting expression we obtain
    \begin{eqnarray}
       \big| \langle u,w u_{t}\rangle _{H^{s}}-\langle u_{t},wu\rangle _{H^{s}} \big|
        &\leq & C \Big (\Vert u\Vert _{H^{s}}\Vert v\Vert _{H^{s}}\Vert w\Vert _{H^{s_{0}+1}}
            +\Vert u\Vert _{H^{s}}\Vert v\Vert _{H^{s_{0}+1}}\Vert w\Vert _{H^{s}} \nonumber   \\
        && + \Vert u\Vert _{H^{s_{0}}} \Vert v\Vert_{H^{s}}\Vert w\Vert _{H^{s+1}}
            +\epsilon^{p}\big (\Vert f_{1}\Vert_{H^{s_{0}}} \Vert u\Vert_{H^{s}}  \Vert w\Vert _{H^{s}} \nonumber  \\
        && +\Vert f_{1}\Vert_{H^{s}} \Vert u\Vert_{H^{s}}\Vert w\Vert_{H^{s_{0}+1}}
            +\Vert f_{1}\Vert_{H^{s}}\Vert u\Vert _{H^{s_{0}}}\Vert w\Vert_{H^{s}}\big )\Big). \nonumber \\ \label{term4}
    \end{eqnarray}
    Using (\ref{lin-en}) and the estimates (\ref{term1}), (\ref{term2}), (\ref{term3}), (\ref{term4}) in (\ref{en}) we obtain
    \begin{eqnarray}
        {d\over dt}E_s(t)
        &\leq &  C \epsilon^p \Big(\big(\Vert w_t\Vert_{H^{s_{0}}}+ \Vert w_x\Vert_{H^{s_{0}}}\big) E_s(t)
            +\Vert f_1\Vert_{H^s}+\Vert f_2\Vert_{H^s}
             \nonumber \\
        & ~& ~~~~~~~~~ +\epsilon^p \Vert f_1\Vert_{H^s} \big(\Vert w\Vert_{H^s} + \Vert w\Vert_{H^{s_{0}+1}}\big)\nonumber \\
        & ~&    +\big(\Vert w_x\Vert_{H^s}+\Vert w_t\Vert_{H^s}\big)\big(\Vert u\Vert_{H^{s_{0}}}
            + \Vert v\Vert_{H^{s_{0}+1}}\big) \Big )  \nonumber \\
        &\leq &C\epsilon^p \Big( \Vert w\Vert_{H_{\epsilon(1)}^{s_{0}+1}} E_s(t)
            +\Vert w\Vert_{H_{\epsilon(1)}^{s+1}} E_{s_{0}+1}(t) +(1+\epsilon^p \Vert w\Vert_{H_{\epsilon (0)}^s}) \Vert\mathbf{f}(t)\Vert_{X^s}\Big ) \nonumber  \\
        &\leq &C\epsilon^p \Big( \Vert w\Vert_{H_{\epsilon(1)}^{s_{0}+1}} E_s(t)
            +\Vert w\Vert_{H_{\epsilon(1)}^{s+1}} E_{s_{0}+1}(t) + \Vert\mathbf{f}(t)\Vert_{X^s}\Big),   \label{en1}
    \end{eqnarray}
    where we have used $\epsilon^p \Vert w\Vert_{H_{\epsilon (0)}^s} \leq C$.      Applying the Gronwall inequality,  $E_{s}(t)$  is estimated  as follows
    \begin{equation}
        E_s(t)
        \leq ~e^{C\epsilon^p t\Vert w\Vert_{H_{\epsilon(1)}^{s_{0}+1}}}\Big ( E_s(0) +C \epsilon ^p\Vert w\Vert_{H_{\epsilon(1)}^{s+1}}\int_0^t E_{s_{0}+1}(t^{\prime})dt^{\prime} +C\epsilon^{p} \int_0^t\Vert \mathbf{f}(t^{\prime})\Vert_{X^s} dt^{\prime} \Big ).  \label{es}
    \end{equation}
    The next step is to eliminate the term $E_{s_{0}+1}(t)$ in (\ref{es}). This is accomplished by getting a similar inequality for $E_{s_{0}+1}(t)$. For  $s=s_{0}+1$, the differential inequality (\ref{en1}) takes the form
    \begin{displaymath}
        {d \over dt}E_{s_{0}+1}(t)
        \leq C\epsilon ^p \Big (\Vert w\Vert_{H_{\epsilon (1)}^{s_{0}+2}}E_{s_{0}+1}(t) +\Vert \mathbf{f}(t)\Vert_{X^{s_{0}+1}} \Big ).
    \end{displaymath}
    Again, by the Gronwall inequality, we get
    \begin{displaymath}
        E_{s_{0}+1}(t)
        \leq ~e^{C\epsilon ^{p}t\Vert w\Vert _{H_{\epsilon(1)}^{s_{0}+2}}} \Big (E_{s_{0}+1}(0)+C\epsilon^p  \int_0^t \Vert\mathbf{f}(t^{\prime})\Vert_{X^{s_{0}+1}}dt^{\prime}\Big ).
    \end{displaymath}
    We need an estimate for time integral of $E_{s_{0}+1}(t)$
    \begin{eqnarray*}
    \int_0^t E_{s_{0}+1}(t^{\prime})dt^{\prime}
    &\leq &  \int_0^t e^{C\epsilon^{p}t^{\prime}\Vert w\Vert _{H_{\epsilon(1)}^{s_{0}+2}}} \Big (E_{s_{0}+1}(0)  \\
    && +C\epsilon^p \int_0^{t^{\prime}} \sup_{0\leq t^{\prime\prime} \leq t^{\prime\prime\prime}}\Vert\mathbf{f}(t^{\prime\prime})\Vert_{X^{s_0+1}}dt^{\prime\prime\prime}\Big ) dt^{\prime}\\
    &\leq & Cte^{C\epsilon^p t \Vert w\Vert _{H_{\epsilon(1)}^{s_{0}+2}}} \Big( E_{s_{0}+1}(0)
    + \int_0^t \sup_{0\leq t^{\prime} \leq t^{\prime\prime}}\Vert\mathbf{f}(t^{\prime})\Vert_{X^{s_0+1}}dt^{\prime\prime} \Big). \label{e2}
    \end{eqnarray*}
    Using this result in (\ref{es}) we obtain
    \begin{eqnarray}
        E_s(t)
        &\leq &~C e^{C\epsilon^p t\Vert w\Vert_{H_{\epsilon(1)}^{s_{0}+1}}} \bigg( E_s(0) + \int_0^t\sup_{0\leq t^{\prime}\leq t^{\prime\prime}} \Vert\mathbf{f}(t^{\prime})\Vert_{X^s} dt^{\prime\prime} \nonumber\\
        &&  + \epsilon^{p}t\Vert w\Vert_{H_{\epsilon(1)}^{s+1}}  e^{C\epsilon^p t\Vert w\Vert_{H_{\epsilon(1)}^{s_{0}+2}}} \Big( E_{s_{0}+1}(0) + \int_0^t \sup_{0\leq t^{\prime}\leq t^{\prime\prime}} \Vert\mathbf{f}(t^{\prime})\Vert_{X^{s_{0}+1}} dt^{\prime\prime} \Big)\bigg). \nonumber
    \end{eqnarray}
    Then, using the definition in (\ref{prodef}), we get
    \begin{displaymath}
        \Vert\mathbf{u}(t)\Vert_{X^s}
        \leq     C\big(T,\Vert w\Vert_{H_{\epsilon(1)}^{s_{0}+2}}\big) \Big (\mathfrak{I}^s(t,\mathbf{f}, \mathbf{g})    +  \Vert w\Vert_{H_{\epsilon(1)}^{s+1}} \mathfrak{I}^{s_{0}+1}(t,\mathbf{f}, \mathbf{g}) \Big )
    \end{displaymath}
    for $t\in[0,{\frac{T}{\epsilon ^{p}}}]$.     This completes the proof.
\end{proof}

For convenient reference in the remainder of the paper, it is useful to write our energy estimate in terms of a new scaled time variable instead of $t$. Let us do so by introducing the scaled time $\tau=\epsilon^{p}t$, so that we change the time interval  $[0, \frac{T}{\epsilon ^{p}}]$ for $t$ to $\left[ 0,T\right] $ for $\tau$. Then, the linear initial-value problem (\ref{l1})-(\ref{l2}) becomes
        \begin{eqnarray}
            && u_{\tau }=\frac{1}{\epsilon ^{p}}Kv_{x}+f_{1},\hspace*{58pt} u(x,0)=g_{1}(x)\label{l1tau} \\
            && v_{\tau }=\frac{1}{\epsilon ^{p}}Ku_{x}+K(wu)_{x}+f_{2},~~~v(x,0)=g_{2}(x)  \label{l2tau}
        \end{eqnarray}%
in the scaled variable $\tau$. In such case, all of the previous arguments used to prove the energy estimate of Proposition \ref{prop3.3} still hold for the new system with obvious modifications.  Indeed,   we now define the spaces $X_{(0)}^{s}$ and $X_{(1)}^{s}$ as $X_{(0)}^{s}=C\big([0,{T}];X^{s}\big)$ and $X_{(1)}^{s}=C\big([0,{T}];X^{s}\big)\cap  C^{1}\big([0,{T}];X^{s-1}\big)$, respectively. Additionally, the $X_{(1)}^{s}$-norm takes the form
\begin{displaymath}
        \Vert \mathbf{u}\Vert _{X_{(1)}^{s}}
        =\sup_{\tau \in \lbrack 0,{T}]}\Big(\Vert \mathbf{u}(\tau )\Vert _{X^{s}}
            +\epsilon ^{p}\Vert \mathbf{u}_{\tau}(\tau )\Vert _{X^{s-1}}\Big).
\end{displaymath}

\begin{corollary}\label{cor3.4}
    Let $s\geq s_{0}+1$, $T>0$, $w\in H_{(1)}^{s+1}$, $\mathbf{f}=(f_{1},f_{2})\in X_{(0)}^{s}$, $\mathbf{g}=(g_{1},g_{2})\in X^{s}$. Suppose $\mathbf{u}=(u,v)\in X_{ (0)}^{s}$ satisfies the initial-value problem (\ref{l1tau})-(\ref{l2tau}). Then, there is some $\epsilon_{0}$ such that  for all $0<\epsilon \leq \epsilon_{0}$ and $\tau \in [0, T]$
    \begin{displaymath}
        \Vert\mathbf{u}(\tau)\Vert_{X^s}
            \leq C\big(T,\Vert w\Vert_{H_{(1)}^{s_{0}+2}}\big)  \Big ( \mathfrak{I}^s(\tau,\mathbf{f},\mathbf{g})  +\Vert w\Vert_{H_{(1)}^{s+1}}  \mathfrak{I}^{s_{0}+1}(\tau,\mathbf{f},\mathbf{g})\Big  ).
    \end{displaymath}
\end{corollary}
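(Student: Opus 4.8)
The plan is to re-run the proof of Proposition~\ref{prop3.3} in the scaled time $\tau=\epsilon^{p}t$, keeping the same energy functional $E_{s}^{2}=\frac{1}{2}\big(\Vert u\Vert_{H^{s}}^{2}+\Vert v\Vert_{H^{s}}^{2}+\epsilon^{p}\langle u,wu\rangle_{H^{s}}\big)$. Lemma~\ref{lem3.1} carries over verbatim, since the suprema over $[0,T]$ in $\tau$ that define the scaled norms coincide with the corresponding suprema over $[0,T/\epsilon^{p}]$ in $t$, so the same $\epsilon_{0}$ makes $E_{s}(\tau)$ uniformly equivalent to $\Vert\mathbf{u}(\tau)\Vert_{X^{s}}$. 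It is worth stressing why the estimate cannot simply be quoted from Proposition~\ref{prop3.3}: under the substitution $\bar u(x,t)=u(x,\epsilon^{p}t)$ one has $dt=\epsilon^{-p}d\tau$, so the integral term in $\mathfrak{I}^{s}$ built from the rescaled source picks up a spurious factor $\epsilon^{-p}$; the estimate really has to be redone, and it is the $\epsilon^{p}$-bookkeeping below that makes it come out clean.

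Taking the $L^{2}$ inner products of (\ref{l1tau}) with $\Lambda^{2s}u$ and of (\ref{l2tau}) with $\Lambda^{2s}v$ and adding, the leading contribution $\frac{1}{\epsilon^{p}}\big(\langle Kv_{x},u\rangle_{H^{s}}+\langle Ku_{x},v\rangle_{H^{s}}\big)$ vanishes because $K$ is self-adjoint and $D_{x}$ is skew-adjoint — this is the symmetric-hyperbolic cancellation. Proceeding as in the computation leading to (\ref{en}), but now using (\ref{l1tau}) in the form $Kv_{x}=\epsilon^{p}(u_{\tau}-f_{1})$ inside the term $\langle K(wu)_{x},v\rangle_{H^{s}}$, one is left with
\begin{align*}
    {\frac{d}{d\tau}}E_{s}^{2}
    &=\langle u,f_{1}\rangle_{H^{s}}+\langle v,f_{2}\rangle_{H^{s}}
        +\epsilon^{p}\langle f_{1},wu\rangle_{H^{s}} \\
    &\quad +{\frac{\epsilon^{p}}{2}}\langle u,w_{\tau}u\rangle_{H^{s}}
        +{\frac{\epsilon^{p}}{2}}\big(\langle u,wu_{\tau}\rangle_{H^{s}}-\langle u_{\tau},wu\rangle_{H^{s}}\big),
\end{align*}
which is identical to (\ref{en}) except that the source terms now carry an $O(1)$ rather than an $O(\epsilon^{p})$ coefficient.

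The four groups of terms are estimated exactly as in (\ref{term1})--(\ref{term4}): Cauchy--Schwarz for the $f$-terms, the Moser estimate (\ref{moser}) for $\langle u,w_{\tau}u\rangle_{H^{s}}$, and the commutator $[\Lambda^{s},w]$ together with the Kato--Ponce estimate (\ref{kato}) for the last bracket, after which $u_{\tau}$ is eliminated through (\ref{l1tau}). The one point deserving care is that the factor $\epsilon^{-p}$ produced by $u_{\tau}=\frac{1}{\epsilon^{p}}Kv_{x}+f_{1}$ cancels the prefactor $\frac{\epsilon^{p}}{2}$ of the commutator term, while the remaining $\epsilon^{p}$-weighted time derivatives are absorbed by the scaled norms via $\epsilon^{p}\Vert w_{\tau}\Vert_{H^{\sigma}}\leq\Vert w\Vert_{H_{(1)}^{\sigma+1}}$; combined with $\Vert w\Vert_{H^{\sigma}}\leq\Vert w\Vert_{H_{(1)}^{\sigma}}$ and $\epsilon^{p}\Vert w\Vert_{H_{(0)}^{s}}\leq C$ from Lemma~\ref{lem3.1}, this turns the analogue of (\ref{en1}) into the $\epsilon$-free differential inequality
\begin{displaymath}
    {\frac{d}{d\tau}}E_{s}(\tau)
    \leq C\Big(\Vert w\Vert_{H_{(1)}^{s_{0}+1}}E_{s}(\tau)
        +\Vert w\Vert_{H_{(1)}^{s+1}}E_{s_{0}+1}(\tau)
        +\Vert\mathbf{f}(\tau)\Vert_{X^{s}}\Big).
\end{displaymath}

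Finally, applying Gronwall's inequality on $[0,T]$, bootstrapping with the case $s=s_{0}+1$ to bound $\int_{0}^{\tau}E_{s_{0}+1}(\tau')\,d\tau'$, and substituting back — just as in the passage from (\ref{es}) onwards — yields the claim; since $\tau\leq T$, every occurrence of $\tau$ as a coefficient or inside an exponential is bounded by $T$, so $\Vert w\Vert_{H_{(1)}^{s_{0}+2}}$ enters only through the constant $C(T,\cdot)$ while $\Vert w\Vert_{H_{(1)}^{s+1}}$ stays explicit. Converting $E_{s}$ back to $\Vert\mathbf{u}\Vert_{X^{s}}$ via Lemma~\ref{lem3.1} and inserting the definition (\ref{prodef}) of $\mathfrak{I}^{s}$ gives the stated bound. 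The argument is essentially mechanical; its only genuine subtlety is the cancellation of $\epsilon^{-p}$ against $\epsilon^{p}$ just described, which is precisely what the naive change of variables would miss.
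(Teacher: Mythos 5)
Your proof is correct and follows essentially the same route as the paper, which simply observes that the argument of Proposition~\ref{prop3.3} carries over to the $\tau$-variable with obvious modifications; your $\epsilon^{p}$-bookkeeping (the cancellation in the commutator term and the identity $\epsilon^{p}\Vert w_{\tau}\Vert_{H^{\sigma}}=\Vert w_{t}\Vert_{H^{\sigma}}$ linking the scaled and unscaled $H_{(1)}$-norms) is exactly the content of those modifications, worked out explicitly.
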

We note that the constant $C$ in Proposition \ref{prop3.3} and Corollary \ref{cor3.4} also depends on the operator norm of $K$.

As seen in Corollary \ref{cor3.4}, there is a loss of derivative in the energy estimate for the linear system, that is, $\Vert \mathbf{u}(\tau)\Vert_{X^{s}}$ is controlled by $\Vert w\Vert_{H^{s+1}_{(1)}}$, the norm of the reference state. This loss of derivative propagates along the iteration scheme and may cause problems in a standard Picard iteration scheme for the nonlinear system (\ref{systema})-(\ref{systemb}). In order to handle the loss of derivative, we will use the Nash-Moser-type  approach  described in \cite{Alvarez2008} for a general system of evolution equations. The following section serves as preparation for the Nash-Moser scheme.

\section{Preparation for the Nash-Moser scheme}\label{sec4}

In preparation for the proof of our main result, we outline in this section certain preliminaries essential for understanding how the approach used in \cite{Alvarez2008} is related to the present case.

In \cite{Alvarez2008}, the authors have studied the well-posedness of the following general class of initial-value problems (see equation (1.1) of  \cite{Alvarez2008})
\begin{equation}
    \partial _{t}\mathbf{u}^{\epsilon }+\frac{1}{\epsilon }\mathcal{L}^{\epsilon}(t)\mathbf{u}^{\epsilon} +\mathcal{N}^{\epsilon }[t,\mathbf{u}^{\epsilon }] =\mathbf{h}^{\epsilon }, ~~~~
    \mathbf{u}^{\epsilon }(0) =\mathbf{u}_{0}^{\epsilon },  \label{alv}
\end{equation}%
where $\epsilon >0$ is a small parameter and $\mathcal{L}^{\epsilon }(t)$ and $\mathcal{N}^{\epsilon }[t,.]$ are  linear and nonlinear operators, respectively. By making three  simplifying assumptions, they have proved their  well-posedness theorem for time intervals $[0,\overline{T}]$ where $\overline{T}>0$ is independent of $\epsilon$. While two of the assumptions are concerned with $\mathcal{L}^{\epsilon }$ and $\mathcal{N}^{\epsilon }$, the third assumption is about the existence of a tame estimate for the solution of the related linearized system. Their main result  is the following general theorem:
\begin{theorem}\label{theo4.1}
    (Theorem 2.1 of \cite{Alvarez2008})
    Let $T>0,$ $\ s_{0},m,d_{1}$ and $d_{1}^{\prime }$ be such that Assumptions 1.2, 1.3 and 1.5 of \cite{Alvarez2008} are satisfied. Let also $D>\delta$, ~$P>P_{\min },~s\geq s_0+m$ and $(\mathbf{h}^{\epsilon },\mathbf{u}_{0}^{\epsilon })_{0<\epsilon <\epsilon_0}$ be bounded in $F^{s+P}.$ Then there exist $0<\underline{T}\leq T$ and a unique family $(\mathbf{u}^{\epsilon })_{0<\epsilon <\epsilon_{0}}$ bounded in $C\big([0,\underline{T}], X^{s+D}\big)$ and solving the initial value problems (\ref{alv})$_{0<\epsilon <\epsilon_0}$.
\end{theorem}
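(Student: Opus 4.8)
Theorem \ref{theo4.1} is quoted verbatim from \cite{Alvarez2008}, so in our paper it is invoked rather than reproved; the work of Section \ref{sec4} is only to check that Assumptions 1.2, 1.3 and 1.5 of \cite{Alvarez2008} hold for (\ref{systema})-(\ref{systemb}). For orientation, here is how the proof of the abstract theorem proceeds. The plan is to run a Nash-Moser iteration. First one fixes a family of Friedrichs-type smoothing operators $S_{\theta}=\chi(\theta^{-1}D_{x})$, $\theta\geq 1$, obeying $\Vert S_{\theta}\mathbf{u}\Vert_{X^{s+a}}\leq C\theta^{a}\Vert\mathbf{u}\Vert_{X^{s}}$ and $\Vert(I-S_{\theta})\mathbf{u}\Vert_{X^{s-a}}\leq C\theta^{-a}\Vert\mathbf{u}\Vert_{X^{s}}$ for $a\geq 0$, with constants independent of $\epsilon$. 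With a geometrically growing sequence $\theta_{n}=\theta_{0}\delta^{n}$, $\delta>1$, one defines the iterates inductively: given $\mathbf{u}_{n}$ with residual $\mathbf{e}_{n}$, let $\mathbf{w}_{n+1}$ solve the linearized problem with reference state $\mathbf{u}_{n}$ and forcing $S_{\theta_{n}}\mathbf{e}_{n}$, and set $\mathbf{u}_{n+1}=\mathbf{u}_{n}+\mathbf{w}_{n+1}$.

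The heart of the scheme is the tame a priori estimate for the linearized equation --- Assumption 1.5 of \cite{Alvarez2008} --- which in the concrete situation is precisely Proposition \ref{prop3.3} / Corollary \ref{cor3.4}: the linearized solution is bounded linearly in the top Sobolev norm of the reference state, with a loss of exactly one derivative, and with all constants uniform in $\epsilon$. Next I would combine this estimate with the smoothing bounds to obtain the quadratic recursion for the residuals: $\mathbf{e}_{n+1}$ is the sum of a quadratic term, coming from $\mathcal{N}^{\epsilon}[\mathbf{u}_{n+1}]-\mathcal{N}^{\epsilon}[\mathbf{u}_{n}]$ minus its linearization applied to $\mathbf{w}_{n+1}$, and a smoothing-defect term $(I-S_{\theta_{n}})\mathbf{e}_{n}$. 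Then one proves, by induction on $n$ and for $D>\delta$, $P>P_{\min}$ as in the statement and $\underline{T}$ chosen small (but independent of $\epsilon$), bounds of the shape $\Vert\mathbf{w}_{n+1}\Vert_{X^{s+D}}\leq C\theta_{n}^{D-\delta}$ and $\Vert\mathbf{e}_{n}\Vert_{X^{s}}\leq C\theta_{n}^{-(P-D)}$, the exponents being tuned so that $\sum_{n}\Vert\mathbf{w}_{n+1}\Vert_{X^{s+D'}}<\infty$ for some $\delta<D'<D$. From this, $\mathbf{u}_{n}$ converges in $C([0,\underline{T}],X^{s+D'})$, lower semicontinuity of the norm preserves the uniform bound in $C([0,\underline{T}],X^{s+D})$, and letting $\mathbf{e}_{n}\to 0$ identifies the limit as a solution of (\ref{alv}); uniqueness follows from a Gronwall argument on the difference of two solutions in a lower norm, using the linearized estimate again.

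The hard part, as in every Nash-Moser argument, is the exponent bookkeeping in the inductive step: one must verify that the loss of one derivative in the tame estimate is beaten by the gain $\theta_{n}^{a}$ that $S_{\theta_{n}}$ supplies on the high norm, and that the conditions $D>\delta$, $P>P_{\min}$ and $s\geq s_{0}+m$ can be met simultaneously; closing the induction also requires the uniform-in-$\epsilon$ bound on $(\mathbf{h}^{\epsilon},\mathbf{u}_{0}^{\epsilon})$ in $F^{s+P}$. The singular prefactor $\frac{1}{\epsilon}\mathcal{L}^{\epsilon}$ does not create an additional obstacle here, because the tame estimate of Corollary \ref{cor3.4} is already uniform in $\epsilon$ --- the $\epsilon^{p}$ prefactor in the energy identity (\ref{en}) exactly balances the length $T/\epsilon^{p}$ of the time interval on which Gronwall is applied --- so every constant entering the induction is $\epsilon$-independent and the existence time $\underline{T}$ stays bounded away from zero as $\epsilon\to 0$. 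Full details are in \cite{Alvarez2008}.
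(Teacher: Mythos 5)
Your proposal is correct and matches the paper's treatment: Theorem \ref{theo4.1} is quoted from \cite{Alvarez2008} and never reproved here, the paper's actual work being the verification in Section \ref{sec4} that Assumptions 1.2, 1.3 and 1.5 hold for (\ref{sysa}), exactly as you say. Your sketch of the Nash-Moser iteration is a reasonable summary of the cited proof, but it is supplementary; the paper itself simply invokes the theorem.
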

The spaces $\left\{ X^s \right\}_{s\geq 0}$ form a Banach scale and $F^s$ is defined as $F^s=C\big([0,T];X^s\big)\times X^{s+m}$. The constants $\delta $ and $P_{\min }$ appearing in the statement of the theorem are related to certain constants resulting from the above-mentioned three assumptions. For a detailed discussion of these assumptions, we refer the reader to \cite{Alvarez2008}.

In the rest of this section we will show that each of the  three assumptions of Theorem  \ref{theo4.1}, namely each of Assumptions 1.2, 1.3 and 1.5 of \cite{Alvarez2008}, also holds for our problem (\ref{systema})-(\ref{systemb}).  We first rewrite (\ref{systema})-(\ref{systemb}) in the form
\begin{equation}
    \mathbf{u}_{\tau }+{\frac{1}{\epsilon ^{p}}}\mathcal{L}\mathbf{u}+\mathcal{N}[\mathbf{u}]=\mathbf{0},~~~~
     \mathbf{u}(x,0) = \mathbf{u}_{0}(x), \label{sysa}
\end{equation}
where $\tau=\epsilon^{p}t$, $\mathbf{u}=(u,v)^{T}$, and the linear operator $\mathcal{L}$ and the nonlinear map $\mathcal{N}[.]$ are given respectively by
\begin{displaymath}
    \mathcal{L}=\left(
        \begin{array}{cc}
                0 & -KD_{x} \\
                -KD_{x} & 0%
        \end{array}%
        \right),\hspace*{1cm}\mathcal{N}\left[ \mathbf{u}\right] =\left(
        \begin{array}{c}
        0 \\
        -KD_{x}\big(u^{p+1}\big)%
        \end{array}%
        \right) .
\end{displaymath}
We note that our case is simpler than the one in \cite{Alvarez2008} since $\mathcal{L}$ and $\mathcal{N}$ are both independent of $\epsilon $ and $\tau$ so that $T$ below can be chosen arbitrarily large. Also,  the parameter $\epsilon $ appearing in (\ref{alv}) is replaced by $\epsilon ^{p}$ in our case.

For (\ref{sysa}) we work with the Banach space  $X^{s}$ and the smoothing operators   $S_{\theta }u=\mathcal{F}^{-1}\big(\chi_{[-\theta, \theta]}(\xi)\widehat{u}(\xi )\big)$ where $\chi$ is the characteristic function. These choices satisfy the requirements of a Banach scale.  We now proceed to show that  the three assumptions of \cite{Alvarez2008} also hold for (\ref{sysa}).

The first assumption  is about the  evolution operator generated by $\mathcal{L}$.
\begin{assumption}\label{asmp1}
    Consider the linear problem
    \begin{equation}
        \mathbf{u}_{\tau }+{\frac{1}{\epsilon ^{p}}}\mathcal{L}\mathbf{u}=\mathbf{0}, ~~~~\mathbf{u}(x,0)=\mathbf{g}(x).  \label{lin}
    \end{equation}
    \begin{enumerate}
        \item The linear operator $\mathcal{L}$ is uniformly bounded in $C\big(\mathbb{R};\mathfrak{L}(X^{s+1},X^{s})\big)$;
        \item The solution operator $U^{\epsilon }(\tau)$ for the linear problem (\ref{lin}) is uniformly bounded in $C\big(\mathbb{R};\mathfrak{L}(X^{s},X^{s})\big)$.
    \end{enumerate}
\end{assumption}
The following discussion shows that this assumption is valid.    $K$ is a bounded operator on $H^{s}$ due to (\ref{betabound}). Thus $KD_{x}$ maps $ H^{s+1}$ into $H^{s}$ and so $\mathcal{L}:X^{s+1}\rightarrow X^{s}$ is a bounded operator independent of $\tau $. The evolution operator $U^{\epsilon }$ satisfies $U^{\epsilon }(\tau )\mathbf{g}=\mathbf{u}(\tau)$. In Fourier space the solution of (\ref{lin}) is given by
\begin{displaymath}
        \left(\begin{array}{c}
            \widehat{u}(\xi ,\tau ) \\
            \widehat{v}(\xi ,\tau )
        \end{array}%
        \right) =e^{\tau \mathbf{A}}\left(
                                    \begin{array}{c}
                                        \widehat{g}_{1}(\xi ) \\
                                        \widehat{g}_{2}(\xi )%
                                    \end{array}
                                    \right)
\end{displaymath}
where
\begin{displaymath}
        \mathbf{A}=\left[
                    \begin{array}{cc}
                    0 & -i{\frac{\xi }{\epsilon ^{p}}}\sqrt{\widehat{\beta }(\xi )} \\
                    -i{\frac{\xi }{\epsilon ^{p}}}\sqrt{\widehat{\beta }(\xi )} & 0%
                    \end{array}
                    \right] .
\end{displaymath}%
The  operator $U^{\epsilon }(\tau )$ can be easily computed in the Fourier space as $U^{\epsilon }(\tau )=\mathcal{F}^{-1}e^{\tau \mathbf{A}}\mathcal{F}$ with
\begin{displaymath}
        e^{\tau \mathbf{A}}
            =\left[
            \begin{array}{cc}
            \cos \Big({\frac{\xi}{\epsilon^{p}}}\sqrt{\widehat{\beta}(\xi)}\tau\Big)
                & -i\sin \Big({\frac{\xi}{\epsilon^{p}}}\sqrt{\widehat{\beta}(\xi)}\tau \Big) \\
            -i\sin \Big({\frac{\xi}{\epsilon^{p}}}\sqrt{\widehat{\beta}(\xi)}\tau \Big)
                & \cos \Big({\frac{\xi }{\epsilon ^{p}}}\sqrt{\widehat{\beta }(\xi )}\tau \Big)
            \end{array}%
            \right] .
\end{displaymath}%
Obviously $U^{\epsilon }(\tau )$ is uniformly bounded in $C\big(\mathbb{R};\mathfrak{L}(X^{s},X^{s})\big)$.

The second assumption is about estimates for the nonlinear term.
\begin{assumption}\label{asmp2}
        For $s\geq s_{0}$ we have the following nonlinear estimates:
        \begin{eqnarray}
        && \hspace*{-35pt}\mbox{\emph{1.}} ~~~~
            \left\Vert \mathcal{N}[\mathbf{u}]\right\Vert _{X^s}
            \leq C\left\Vert \mathbf{u}\right\Vert_{X^{s_{0}}}^{p}\left\Vert \mathbf{u}\right\Vert _{X^{s+1}}; \label{as-21} \\
        && \hspace*{-35pt}\mbox{\emph{2.}} ~~~~
            \left\Vert \mathcal{N}_{\mathbf{u}}\left[ \mathbf{u}\right] \pmb{\phi}\right\Vert_{X^s}
            \leq C\Big(\left\Vert \mathbf{u}\right\Vert _{X^{s_{0}}}^{p}+\left\Vert \mathbf{u}\right\Vert _{X^{s_{0}}}^{p-1}\Big)\Big( \left\Vert \pmb{\phi}\right\Vert_{X^{s+1}}+\left\Vert \pmb{\phi}\right\Vert _{X^{s_{0}}}\left\Vert \mathbf{u}\right\Vert _{X^{s+1}}\Big); \label{as-22} \\
        && \hspace*{-35pt}\mbox{\emph{3.}} ~~~~
            \left\Vert\mathcal{N}_{\mathbf{uu}}\left[\mathbf{u}\right](\pmb{\phi},\pmb{\psi})\right\Vert_{X^s}
            \leq  C\Big(\left\Vert \mathbf{u}\right\Vert _{X^{s_{0}}}^{p-1}+\left\Vert \mathbf{u}\right\Vert _{X^{s_{0}}}^{p-2}\Big)\Big(\Vert \pmb{\phi}\Vert _{X^{s+1}}\Vert\pmb{\psi}\Vert _{X^{s_{0}}} \nonumber \\
        &&\hspace*{90pt}+\Vert \pmb{\phi}\Vert _{X^{s_{0}}}\Vert \pmb{\psi}\Vert _{X^{s+1}}
            +\Vert \mathbf{u}\Vert _{X^{s+1}}\Vert \pmb{\phi}\Vert _{X^{s_{0}}}\Vert\pmb{\psi}\Vert _{X^{s_{0}}}\Big). \label{as-23}
        \end{eqnarray}
\end{assumption}
We now show that the above assumption  is valid. By repeatedly applying (\ref{moser}) to $\mathcal{N}\left[ \mathbf{u}\right]=\big(0, -KD_{x}(u^{p+1})\big)$ where  $\mathbf{u}=(u,v)$, we get
        \begin{eqnarray*}
            \left\Vert \mathcal{N}[\mathbf{u}]\right\Vert _{X^s}
            &=&\left\Vert K D_{x}(u^{p+1})\right\Vert _{H^{s}}\leq C\left\Vert u^{p+1}\right\Vert _{H^{s+1}} \\
            &\leq & C\left\Vert u\right\Vert _{H^{s_{0}}}^{p}\left\Vert u\right\Vert_{H^{s+1}}
            \leq C\left\Vert \mathbf{u}\right\Vert _{X^{s_{0}}}^{p}\left\Vert \mathbf{u}\right\Vert _{X^{s+1}},
        \end{eqnarray*}
that is, (\ref{as-21}) holds. To check (\ref{as-22}) we note that $\mathcal{N}_{\mathbf{u}}\left[ \mathbf{u}\right] \pmb{\phi} = \big(0, -(p+1)KD_{x}(u^{p}\phi_{2})\big)$, where   $\pmb{\phi}=(\phi_{1},\phi_{2})$. Then we have
        \begin{eqnarray*}
            \left\Vert \mathcal{N}_{\mathbf{u}}\left[ \mathbf{u}\right] \pmb{\phi}\right\Vert_{X^{s}}
            &=&(p+1)\left\Vert KD_{x}(u^{p}\phi_{2})\right\Vert _{H^{s}}
                \leq C\left\Vert u^{p}\phi_{2}\right\Vert _{H^{s+1}} \\
            &\leq &  C\Big( \left\Vert u\right\Vert _{H^{s_{0}}}^{p}\left\Vert \phi_{2}\right\Vert_{H^{s+1}}+\left\Vert u\right\Vert _{H^{s_{0}}}^{p-1}\left\Vert \phi_{2}\right\Vert_{H^{s_{0}}}\left\Vert u\right\Vert _{H^{s+1}}\Big) \\
            &\leq & C \Big(\left\Vert \mathbf{u}\right\Vert _{X^{s_{0}}}^{p}+\left\Vert \mathbf{u}\right\Vert _{X^{s_{0}}}^{p-1}\Big)\Big( \left\Vert \pmb{\phi}\right\Vert_{X^{s+1}}+\left\Vert \pmb{\phi}\right\Vert _{X^{s_{0}}}\left\Vert \mathbf{u}\right\Vert _{X^{s+1}}\Big),
        \end{eqnarray*}
    that is, (\ref{as-22}) holds. Similarly, to check (\ref{as-23}) we first note that the only nonzero element of
    $\mathcal{N}_{\mathbf{uu}}\left[ \mathbf{u}\right] \left( \pmb{\phi},\pmb{\psi}\right)$ is
     $ -p(p+1)KD_{x}(u^{p-1}\phi_{2}\psi_{2})$  where $\pmb{\psi}=(\psi_{1},\psi_{2})$. Then we have
        \begin{eqnarray*}
            \left\Vert \mathcal{N}_{\mathbf{uu}}\left[ \mathbf{u}\right] (\pmb{\phi},\pmb{\psi})\right\Vert_{X^{s}}
            &\leq & p(p+1)\Vert K D_{x}(u^{p-1}\phi_{2}\psi_{2})\Vert _{H^{s}}
                \leq C\Vert u^{p-1}\phi_{2}\psi_{2}\Vert _{H^{s+1}} \\
            &\leq & C\Big(\Vert u\Vert _{H^{s_{0}}}^{p-1}\Vert \phi_{2}\Vert _{H^{s_{0}}}\Vert \psi_{2}\Vert _{H^{s+1}}
                +\Vert u\Vert _{H^{s_{0}}}^{p-1}\Vert \phi_{2}\Vert _{H^{s+1}}\Vert \psi_{2}\Vert _{H^{s_{0}}} \\
            && +\Vert u\Vert _{H^{s_{0}}}^{p-2}\Vert \phi_{2}\Vert_{H^{s_{0}}}\Vert \psi_{2}\Vert _{H^{s_{0}}}
                \Vert u\Vert _{H^{s+1}}\Big) \\
            &\leq &C\Big(\left\Vert \mathbf{u}\right\Vert _{X^{s_{0}}}^{p-1}
                +\left\Vert \mathbf{u}\right\Vert _{X^{s_{0}}}^{p-2})
                (\Vert \pmb{\phi}\Vert_{X^{s+1}}\Vert\pmb{\psi}\Vert _{X^{s_{0}}}
                +\Vert \pmb{\phi}\Vert _{X^{s_{0}}}\Vert \pmb{\psi}\Vert _{X^{s+1}} \\
            && +\Vert \mathbf{u}\Vert _{X^{s+1}}\Vert \pmb{\phi}\Vert _{X^{s_{0}}}
                \Vert\pmb{\psi}\Vert _{X^{s_{0}}}\Big),
        \end{eqnarray*}
that is, (\ref{as-23}) holds.

The third assumption is about the well posedness and a priori estimates for the linearized system:
\begin{assumption}\label{asmp3}
     Consider the linearized problem around $\overline{\mathbf{u}}=(\overline{u},\overline{v})$:
        \begin{equation}
            \mathbf{u}^{\epsilon}_{\tau }+{\frac{1}{\epsilon^{p}}}\mathcal{L}\mathbf{u}^{\epsilon}
                +\mathcal{N}_{\mathbf{u}}[\overline{\mathbf{u}}]\mathbf{u}^{\epsilon}
            =\mathbf{f}^{\epsilon },~~\mathbf{u}^{\epsilon}(x,0)
            =\mathbf{g}^{\epsilon }(x).  \label{linear}
        \end{equation}
     For given $\overline{\mathbf{u}}\in X_{(1)}^{s+1}$, and bounded families $\mathbf{f}^{\epsilon }\in X_{(0)}^{s}$, $\mathbf{g}^{\epsilon }\in X^{s}$ with $s\geq s_{0}+3$, the initial-value problem  (\ref{linear}) has a unique solution $\mathbf{u}^{\epsilon }\in X_{(0)}^{s}$ and for all $0< \epsilon < \epsilon_{0}$ and $\tau \in [0, T]$
    \begin{displaymath}
        \left\Vert \mathbf{u}^{\epsilon}\right\Vert _{X_{(0)}^{s}}
        \leq C\big(T,\left\Vert \overline{\mathbf{u}}\right\Vert _{X_{(1)}^{s_{0}+2}}\big)
            \Big(\mathfrak{I}^{s}(\tau,\mathbf{f}^{\epsilon },\mathbf{g}^{\epsilon})
            +\left\Vert\overline{\mathbf{u}}\right\Vert_{X_{(1)}^{s+1}}\mathfrak{I}^{s_{0}+1}(\tau,\mathbf{f}^{\epsilon }\mathbf{,g}^{\epsilon })\Big).
    \end{displaymath}
\end{assumption}
In the remaining part of this section we show that the above assumption holds for our system.

We first note that  (\ref{linear}) is just (\ref{l1tau})-(\ref{l2tau}) with $w=(p+1)\overline{u}^{p}$. For the existence proof of the linearized system, we follow the standard hyperbolic approach \cite{Taylor2011} to obtain a solution of (\ref{l1tau})-(\ref{l2tau}) as a limit of solutions of the regularized system
\begin{equation}
    \mathbf{u}_{\tau }^{h}+{\frac{1}{\epsilon^{p}}}J^{h}\mathcal{L}\mathbf{u}^{h}+J^{h}\mathcal{N}_{\mathbf{u}}
        [\overline{\mathbf{u}}]\mathbf{u}^{h}
    =\mathbf{f},~~\mathbf{u}^{h}(x,0)=\mathbf{g}(x).  \label{mol}
\end{equation}
where we omit the index $\epsilon$ for convenience. In (\ref{mol}) $J^{h}$ is the Friedrichs mollifier given by
\begin{displaymath}
    J^{h}z(x)=\frac{1}{h}\int \eta \Big(\frac{x-y}{h}\Big)z(y)dy
\end{displaymath}
with some nonnegative $\eta \in C_{0}^{\infty }(\mathbb{R)}$ with $\int \eta dx=1$.

Due to regularizing effect of $J^{h},$ (\ref{mol}) is a system of  $X^{s}$-valued ODEs and hence has solution $\mathbf{u}^{h}$ in $X_{(0)}^{s}$. Moreover, since $J^{h}\mathcal{L}$ and $J^{h}\mathcal{N}_{\mathbf{u}}$ satisfy the same bounds as $\mathcal{L}$ and $\mathcal{N}_{\mathbf{u}}$, respectively; Corollary \ref{cor3.4} will hold uniformly in $h$. Using the inequality
\begin{displaymath}
    \left\Vert w\right\Vert _{H_{(1)}^{s+1}}
    =(p+1)\left\Vert \overline{u}^{p}\right\Vert _{H_{(1)}^{s+1}}\leq C \left\Vert \overline{u}\right\Vert_{H_{(1)}^{s_{0}}}^{p-1}\left\Vert \overline{u}\right\Vert _{H_{(1)}^{s+1}},
\end{displaymath}
we then have, for all $\epsilon < \epsilon _{0}$,
\begin{eqnarray*}
    \left\Vert \mathbf{u}^{h}(\tau)\right\Vert _{X^{s}}
    &\leq & C\big(T,\left\Vert w\right\Vert_{H_{(1)}^{s_{0}+2}}\big)\Big(\mathfrak{I}^{s}(\tau ,\mathbf{f,g})
        +    \left\Vert w\right\Vert_{H_{(1)}^{s+1}}\mathfrak{I}^{s_{0}+1}(\tau ,\mathbf{f,g})\Big) \\
    &\leq &C\big(T,\left\Vert \overline{u}\right\Vert _{H_{(1)}^{s_{0}+2}}\big)\Big(\mathfrak{I}^{s}(\tau ,\mathbf{f,g})
        +\left\Vert \overline{u}\right\Vert_{H_{(1)}^{s+1}}\mathfrak{I}^{s_{0}+1}(\tau, \mathbf{f,g})\Big).
\end{eqnarray*}%
It follows that $\left\Vert \mathbf{u}^{h}\right\Vert _{X_{(0)}^{s}}$ is uniformly bounded on $\left[ 0,T\right] $.

We now show that for any sequence $\big(h_{n}\big)$ with $h_{n}\rightarrow 0$, the solutions $\mathbf{u}^{h_{n}}$ form a Cauchy sequence in $X_{(0)}^{s-2}$ so that $\lim \mathbf{u}^{h_{n}}=\mathbf{u}$ exists in $X_{(0)}^{s-2}$.  We proceed as follows: We fix $\epsilon $ and let $\mathbf{r=}$ $\mathbf{u}^{h_{n}}-\mathbf{u}^{h_{m}}.$ Then $\mathbf{r}$ satisfies
\begin{displaymath}
    \mathbf{r}_{\tau }+{\frac{1}{\epsilon^{p}}}J^{h_{n}}\mathcal{L}\mathbf{r}
        +J^{h_{n}}\mathcal{N}_{\mathbf{u}}[\overline{\mathbf{u}}]\mathbf{r}
    =\big(J^{h_{m}}-J^{h_{n}}\big)\Big(\frac{1}{\epsilon^{p}}\mathcal{L}\mathbf{u}^{h_{m}}
        +\mathcal{N}_{\mathbf{u}}[\overline{\mathbf{u}}]\mathbf{u}^{h_{m}}\Big),~~~~
    \mathbf{r}(x,0)=\mathbf{0}.
\end{displaymath}
This equation is of the form (\ref{l1tau})-(\ref{l2tau}) with the nonhomogeneous term $\widetilde{\mathbf{f}}=\big(J^{h_{m}}-J^{h_{n}}\big)
\big(\frac{1}{\epsilon^{p}}\mathcal{L}\mathbf{u}^{h_{m}}+\mathcal{N}_{\mathbf{u}}
[\overline{\mathbf{u}}]\mathbf{u}^{h_{m}}\big)$ and $\mathbf{g}=\mathbf{0}$.
 Using the mollifier estimate \cite{Mats2017}
\begin{displaymath}
    \left\Vert J^{h_{1}}z-J^{h_{2}}z\right\Vert _{H^{s}}
    \leq \left\vert h_{1}-h_{2}\right\vert \left\Vert z\right\Vert _{H^{s+1}},
\end{displaymath}
we have
\begin{displaymath}
    \left\Vert \widetilde{\mathbf{f}}\right\Vert _{X_{(0)}^{s-2}}
    \leq \left\vert h_{n}-h_{m}\right\vert \left\Vert \frac{1}{\epsilon ^{p}}\mathcal{L}\mathbf{u}^{h_{m}}+\mathcal{N}_{\mathbf{u}}[\overline{\mathbf{u}}]\mathbf{u}^{h_{m}}\right\Vert _{X_{(0)}^{s-1}}
    \leq C\left\vert h_{n}-h_{m}\right\vert \left\Vert \mathbf{u}^{h_{m}}\right\Vert _{X_{(0)}^{s}},
\end{displaymath}
where we have used the fact that the operator $\mathcal{L}$ $\ $maps $X^{s} $ into $X^{s-1}$.  As $\left\Vert \mathbf{u}^{h_{m}}\right\Vert _{X_{(0)}^{s}}$ is uniformly bounded, this yields $\left\Vert \widetilde{\mathbf{f}}\right\Vert _{X_{(0)}^{s-2}}\leq C\left\vert h_{n}-h_{m}\right\vert $. For $s\geq s_{0}+3$ this implies that $ \mathfrak{I}^{s-2}(\tau,\widetilde{\mathbf{f}},\mathbf{0})$ and $ \mathfrak{I}^{s_{0}+1}(\tau,\widetilde{\mathbf{f}},\mathbf{0})$  are both bounded by $C \vert h_{n}-h_{m}\vert$. Then  Corollary \ref{cor3.4} gives
\begin{displaymath}
    \left\Vert \mathbf{u}^{h_{n}}-\mathbf{u}^{h_{m}}\right\Vert _{X_{(0)}^{s-2}}\leq C\left\vert h_{n}-h_{m}\right\vert .
\end{displaymath}
Thus $\big(\mathbf{u}^{h_{n}}\big)$ is a Cauchy sequence  in $X_{(0)}^{s-2}$. Moreover,  it follows from (\ref{l1tau})-(\ref{l2tau}) that $(\mathbf{u}_{\tau }^{h_{n}})$ is also a Cauchy sequence in $X_{(0)}^{s-3}$; in other words we get that ($\mathbf{u}^{h_{n}})$ is Cauchy in $X_{(1)}^{s-2}$.  It is clear that $\lim \mathbf{u}^{h_{n}}=\mathbf{u\in }X_{(1)}^{s-2}$ and $\mathbf{u}$ satisfies (\ref{linear}). Next we show that $\mathbf{u}$ is in $X^{s}_{(1)}$. We first observe that $\left\Vert \mathbf{u}^{h_{n}}\right\Vert_{X^{s}_{(0)}}$ is bounded and thus $(\mathbf{u}^{h_{n}})$ has a weakly convergent subsequence in $X^{s}_{(0)}$. By uniqueness of the limit, this weak limit should be $\mathbf{u}$ which gives the regularity result $\mathbf{u}\in X^{s}_{(0)}$.  Corollary \ref{cor3.4} then gives the required estimate for $\left\Vert \mathbf{u}(\tau)\right\Vert _{X^{s}}$. Since (\ref{linear}) is linear, uniqueness of the solution follows again from Corollary \ref{cor3.4}.

Thus we have established that the three assumptions  of Theorem \ref{theo4.1}  hold for our problem (\ref{sysa}).

\section{Long-time existence of solutions}\label{sec5}

We are now ready to apply Theorem \ref{theo4.1}  to our problem (\ref{sysa}). We  first discuss the parameters appearing in the statement of the theorem. Recall that  $s_0>{1\over 2}$  in our case. As mentioned in Section \ref{sec4}, $T>0$ can be taken arbitrarily large.  The other three parameters $m$, $d_{1}, d_{1}^{\prime}$ appearing in Theorem \ref{theo4.1} refer to  loss of derivatives in the assumptions. We note that, although different values of $m$ appear in the proofs above,  all the assumptions will hold for $m=3$. As a result we have   $m=3$, $d_{1}=1$ and $d_{1}^{\prime}=0$ in our case. The quantities $\delta$, $q$ and $P_{\min}$ are defined as
\begin{equation}
    \delta =\max \{d_{1},d_{1}^{\prime}+m\}, ~~~~    q =D-m-d_{1}^{\prime }, ~~~~
    P_{\min} =\delta +\frac{D}{q}\big (\sqrt{\delta }+\sqrt{2(\delta +q)}\big)^{2}
\end{equation}
in \cite{Alvarez2008}. Thus, these quantities take the values
\begin{equation}
    \delta =3, ~~~~    q =D-3, ~~~~     P_{\min} =3+{\frac{D}{D-3}}\big(\sqrt{3}+\sqrt{2D}\big)^{2}.
\end{equation}
In Theorem \ref{theo4.1} all assumptions are supposed to hold for $\epsilon < \epsilon_{0}$. In our case, for a given
$w$, the parameter $\epsilon_{0}$  is determined by Lemma \ref{lem3.1}. Considering that  $w$ is replaced by $(p+1)u^{p}$ in the linearization (\ref{l2tau}), we see that $\epsilon_{0}$ is determined by the initial data. Thus  we obtain the local well-posedness of   (\ref{sysa}) as follows:
\begin{corollary}\label{maintau}
    Let $D>3$, $P>P_{\min}$, $~s\geq s_{0}+3$ and $\big(\mathbf{u}_{0}^{\epsilon}\big)$ be bounded in $X^{s+P}$. Then there exist some $\epsilon_0>0$, $~T>0$ and a unique family  $\big(\mathbf{u}^{\epsilon}\big)_{0<\epsilon<\epsilon_0}$ bounded in $C\big([0,T];X^{s+D}\big)$ and satisfying (\ref{sysa}) with initial values  $\mathbf{u}_{0}(x) =\mathbf{u}^{\epsilon}_{0}(x)$.
\end{corollary}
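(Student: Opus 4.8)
The plan is to verify that Corollary \ref{maintau} is a direct instantiation of Theorem \ref{theo4.1} applied to the system (\ref{sysa}), once we match up all the structural ingredients. The bulk of the work---checking that the abstract hypotheses of \cite{Alvarez2008} hold in our setting---has already been carried out in Section \ref{sec4}: Assumption \ref{asmp1} (boundedness of $\mathcal{L}\colon X^{s+1}\to X^{s}$ and of the evolution operator $U^{\epsilon}(\tau)$ on $X^{s}$), Assumption \ref{asmp2} (the tame nonlinear estimates (\ref{as-21})--(\ref{as-23}) for $\mathcal{N}$ and its first two derivatives), and Assumption \ref{asmp3} (well-posedness and the loss-of-one-derivative tame estimate for the linearized problem, obtained from Corollary \ref{cor3.4} via a Friedrichs-mollifier argument). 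What remains is essentially bookkeeping of parameters.

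First I would record the parameter identification. Comparing (\ref{alv}) with (\ref{sysa}), the role of $\epsilon$ in \cite{Alvarez2008} is played by $\epsilon^{p}$; the operators $\mathcal{L}^{\epsilon}$ and $\mathcal{N}^{\epsilon}$ are both independent of $\tau$ and of $\epsilon$, so $T$ may be taken arbitrarily large. The loss-of-derivative indices in the three assumptions are $m=3$ (the linearized tame estimate in Assumption \ref{asmp3} requires $s\geq s_{0}+3$ and controls $\Vert\mathbf{u}\Vert_{X^{s}}$ by $\Vert\overline{\mathbf{u}}\Vert_{X_{(1)}^{s+1}}$, so taking the worst case over the assumptions gives $m=3$), $d_{1}=1$ (from $\mathcal{L}\colon X^{s+1}\to X^{s}$), and $d_{1}^{\prime}=0$ (since $\mathcal{N}$ contributes no extra loss beyond the one derivative already counted). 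Hence $\delta=\max\{d_{1},d_{1}^{\prime}+m\}=3$, $q=D-m-d_{1}^{\prime}=D-3$, and
\begin{displaymath}
    P_{\min}=\delta+\frac{D}{q}\big(\sqrt{\delta}+\sqrt{2(\delta+q)}\big)^{2}
        =3+\frac{D}{D-3}\big(\sqrt{3}+\sqrt{2D}\big)^{2},
\end{displaymath}
which is finite precisely when $D>3$. With $D>3$ and $P>P_{\min}$ fixed, Theorem \ref{theo4.1} applies with $s\geq s_{0}+m=s_{0}+3$.

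Next I would handle the role of $\epsilon_{0}$. In Theorem \ref{theo4.1} the assumptions are required to hold uniformly for $0<\epsilon<\epsilon_{0}$; in our case the only place $\epsilon_{0}$ enters is Lemma \ref{lem3.1}, where $\epsilon_{0}^{p}=\bigl(2C\Vert w\Vert_{H_{(0)}^{s}}\bigr)^{-1}$ with $w=(p+1)\overline{u}^{p}$ the linearization coefficient. Since the reference state in the Nash-Moser iteration stays, by construction, within a fixed ball determined by the data, and since the data family $(\mathbf{u}_{0}^{\epsilon})$ is assumed bounded in $X^{s+P}$, one obtains a uniform lower bound $\epsilon_{0}>0$ depending only on that data bound; I would state this and invoke the embedding $X^{s+P}\hookrightarrow X^{s_{0}}$ to make the dependence explicit. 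Finally, with $F^{s}=C([0,T];X^{s})\times X^{s+m}$ and $\mathbf{h}^{\epsilon}=\mathbf{0}$, the hypothesis that $(\mathbf{h}^{\epsilon},\mathbf{u}_{0}^{\epsilon})$ be bounded in $F^{s+P}$ reduces to $(\mathbf{u}_{0}^{\epsilon})$ bounded in $X^{s+P}$, which is exactly our assumption. Theorem \ref{theo4.1} then produces $\underline{T}\in(0,T]$ (which we rename $T$) and a unique family $(\mathbf{u}^{\epsilon})_{0<\epsilon<\epsilon_{0}}$ bounded in $C([0,\underline{T}];X^{s+D})$ solving (\ref{sysa}).

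I do not anticipate a genuine obstacle here, since all the analytic content sits in the verification of the assumptions already completed in Sections \ref{sec3}--\ref{sec4}; the only point demanding a little care is pinning down that $\epsilon_{0}$---and the constant $C(T,\cdot)$ in the linearized estimate---can be chosen uniformly over the iteration, i.e. that they depend only on a fixed bound for the reference state rather than on the reference state itself. This is ensured by the tame (as opposed to merely continuous) nature of the estimates in Proposition \ref{prop3.3}/Corollary \ref{cor3.4}, which is precisely what makes the Nash-Moser machinery of \cite{Alvarez2008} applicable.
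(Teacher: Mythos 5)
Your proposal follows essentially the same route as the paper: the paper likewise treats the corollary as a direct application of Theorem \ref{theo4.1}, using the verification of Assumptions \ref{asmp1}--\ref{asmp3} from Section \ref{sec4}, the same parameter identification $m=3$, $d_{1}=1$, $d_{1}^{\prime}=0$ (hence $\delta=3$, $q=D-3$, and the stated $P_{\min}$), the observation that $T$ can be taken arbitrarily large since $\mathcal{L}$ and $\mathcal{N}$ are $\tau$- and $\epsilon$-independent, and the fact that $\epsilon_{0}$ is fixed by Lemma \ref{lem3.1} with $w=(p+1)u^{p}$, hence by the bound on the initial data. Your additional remarks on the uniformity of $\epsilon_{0}$ over the iteration are consistent with, and slightly more explicit than, the paper's brief discussion.
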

We are now in a position to state our main results.  Corollary \ref{maintau} provides the local well-posedness of solutions to (\ref{sysa}) on the interval $[0, T]$ of the scaled time variable $\tau$. Setting $t=\tau/\epsilon^p$, that is, changing back from the scaled time variable $\tau$ to the original time variable $t$ we get  the long-time existence result for solutions of the initial value problem (\ref{systema})-(\ref{systemb}):
\begin{theorem}\label{realt}
    Let $D>3$, $P>P_{min}$, $~s\geq s_{0}+3$ and $\big(\mathbf{u}_{0}^{\epsilon}\big)$ be bounded in $X^{s+P}$. Then there exist some $\epsilon_0 >0$, $~T>0$ and a unique family  $\big(\mathbf{u}^{\epsilon}\big)_{0<\epsilon<\epsilon_0}$ bounded in $C\big([0,{T\over \epsilon^p}]; X^{s+D}\big)$ and satisfying (\ref{systema})-(\ref{systemb}) with initial values  $\mathbf{u}_{0}(x) =\mathbf{u}^{\epsilon}_0(x)$.
\end{theorem}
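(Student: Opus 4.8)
\textbf{Proof plan for Theorem \ref{realt}.}
The strategy is to obtain Theorem \ref{realt} as an immediate consequence of Corollary \ref{maintau} via the change of time variable $t = \tau/\epsilon^{p}$. First I would invoke Corollary \ref{maintau}: under the hypotheses $D>3$, $P>P_{\min}$, $s\geq s_{0}+3$ and $(\mathbf{u}_{0}^{\epsilon})$ bounded in $X^{s+P}$, there are $\epsilon_{0}>0$ and $T>0$, and a unique family $(\widetilde{\mathbf{u}}^{\epsilon})_{0<\epsilon<\epsilon_{0}}$, bounded in $C\big([0,T];X^{s+D}\big)$, solving the scaled system \eqref{sysa} with initial data $\mathbf{u}_{0}^{\epsilon}$. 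The point is simply that \eqref{sysa} was obtained from \eqref{systema}--\eqref{systemb} precisely by the substitution $\tau = \epsilon^{p}t$, so the two problems are equivalent.

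Next I would carry out the change of variable explicitly. Given $\widetilde{\mathbf{u}}^{\epsilon}\in C\big([0,T];X^{s+D}\big)$ solving \eqref{sysa}, define $\mathbf{u}^{\epsilon}(x,t) = \widetilde{\mathbf{u}}^{\epsilon}(x,\epsilon^{p}t)$ for $t\in[0,T/\epsilon^{p}]$. Then $\mathbf{u}^{\epsilon}_{t} = \epsilon^{p}\,\widetilde{\mathbf{u}}^{\epsilon}_{\tau}$ evaluated at $\tau=\epsilon^{p}t$, so from $\widetilde{\mathbf{u}}^{\epsilon}_{\tau} + \frac{1}{\epsilon^{p}}\mathcal{L}\widetilde{\mathbf{u}}^{\epsilon} + \mathcal{N}[\widetilde{\mathbf{u}}^{\epsilon}] = \mathbf{0}$ one gets $\mathbf{u}^{\epsilon}_{t} + \mathcal{L}\mathbf{u}^{\epsilon} + \epsilon^{p}\mathcal{N}[\mathbf{u}^{\epsilon}] = \mathbf{0}$, which is exactly the first-order system \eqref{systema}--\eqref{systemb} written in operator form (recall $\mathcal{L}\mathbf{u} = (-Kv_{x}, -Ku_{x})^{T}$ and $\mathcal{N}[\mathbf{u}] = (0, -KD_{x}(u^{p+1}))^{T}$, and that $\epsilon^{p}KD_{x}(u^{p+1}) = \epsilon^{p}K(u^{p+1})_{x}$). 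The initial condition $\mathbf{u}^{\epsilon}(\cdot,0) = \widetilde{\mathbf{u}}^{\epsilon}(\cdot,0) = \mathbf{u}_{0}^{\epsilon}$ is preserved. Conversely, any solution of \eqref{systema}--\eqref{systemb} on $[0,T/\epsilon^{p}]$ yields, via $\widetilde{\mathbf{u}}^{\epsilon}(x,\tau) = \mathbf{u}^{\epsilon}(x,\tau/\epsilon^{p})$, a solution of \eqref{sysa} on $[0,T]$; hence uniqueness transfers from Corollary \ref{maintau}.

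Finally I would address the function-space bookkeeping. Since $t\mapsto\epsilon^{p}t$ is a homeomorphism of $[0,T/\epsilon^{p}]$ onto $[0,T]$, composition with it is an isometric isomorphism between $C\big([0,T];X^{s+D}\big)$ and $C\big([0,T/\epsilon^{p}];X^{s+D}\big)$ in the respective sup-norms; in particular $\sup_{t\in[0,T/\epsilon^{p}]}\Vert\mathbf{u}^{\epsilon}(t)\Vert_{X^{s+D}} = \sup_{\tau\in[0,T]}\Vert\widetilde{\mathbf{u}}^{\epsilon}(\tau)\Vert_{X^{s+D}}$, so the uniform-in-$\epsilon$ boundedness of $(\widetilde{\mathbf{u}}^{\epsilon})$ in $C\big([0,T];X^{s+D}\big)$ is equivalent to the uniform-in-$\epsilon$ boundedness of $(\mathbf{u}^{\epsilon})$ in $C\big([0,T/\epsilon^{p}];X^{s+D}\big)$. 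This gives precisely the claimed conclusion. There is no real obstacle here: the entire analytic content — the Nash-Moser iteration, the tame estimates of Proposition \ref{prop3.3} and Corollary \ref{cor3.4}, and the verification of Assumptions \ref{asmp1}--\ref{asmp3} — has already been absorbed into Theorem \ref{theo4.1} and Corollary \ref{maintau}; the only thing to be careful about is that the scaled existence time $T$ is independent of $\epsilon$ (guaranteed because $\mathcal{L}$ and $\mathcal{N}$ are independent of $\epsilon$ and $\tau$), which is what makes the unscaled interval genuinely of length $T/\epsilon^{p}\to\infty$.
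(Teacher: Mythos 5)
Your proposal is correct and follows essentially the same route as the paper: the paper likewise obtains Theorem \ref{realt} directly from Corollary \ref{maintau} by undoing the scaling $\tau=\epsilon^{p}t$, with all the analytic work already contained in the verification of the Nash-Moser hypotheses. Your explicit check that the unscaled system, the initial data, the uniqueness, and the sup-norm bounds transfer under the time change merely spells out what the paper states in one sentence.
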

This is our first main result and it allows us to say that  solutions to (\ref{systema})-(\ref{systemb}) exist over  the long time  interval $[0,{T\over \epsilon^p}]$ of $t$. Now we want to state  an analog of the above theorem  for solutions of the initial-value problem (\ref{nw})-(\ref{ini}). The basic issue is how to choose the initial data $(u_{0}, v_{0})$ for  (\ref{systema})-(\ref{systemb}) when the initial data $(u_{0}, u_{1})$ for (\ref{nw})-(\ref{ini}) is given.  Obviously,  $u_{0}$'s are the same and we have $u_{1}= K\big(v_0\big)_x$. Hence if we have the initial data in the form $u_{1}=(w_{0})_{x}$, we get $w_{0}=Kv_{0}$. When $K$ is invertible, we have $v_{0}=K^{-1}w_{0}$. We note that invertibility of $K$ is equivalent to the strict positivity (ellipticity) condition
\begin{equation}
    0<C_{1} \leq \widehat{\beta}(\xi)\leq C_{2} \label{betapos}
\end{equation}
on the kernel. In this case, we have:
\begin{theorem}\label{realw}
    Suppose the kernel $\beta$ satisfies the ellipticity condition (\ref{betapos}). Let $D>3$, $P>P_{min}$, $~s\geq s_{0}+3$ and $\big(u_0^{\epsilon}, w_{0}^{\epsilon}\big)$ be bounded in $H^{s+P}\times H^{s+P}$. Then there exist some $\epsilon_0>0$, $~T>0$ and a unique family  $\big(u^{\epsilon} \big)_{0<\epsilon<\epsilon_0}$ bounded in $C\big([0,{T\over \epsilon^p}];H^{s+D}\big)\times C^{1}\big([0,{T\over \epsilon^p}];H^{s+D-1}\big)$ and satisfying (\ref{nw})-(\ref{ini}) with initial values $u_{0}(x)=u_{0}^{\epsilon}(x)$,  $u_{1}(x)=\big(w_0^{\epsilon}(x)\big)_x$.
\end{theorem}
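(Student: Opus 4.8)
\textbf{Proof proposal for Theorem \ref{realw}.}
The plan is to reduce Theorem \ref{realw} to Theorem \ref{realt} by a careful translation of initial data and an appeal to the equivalence between the scalar equation (\ref{nw}) and the first-order system (\ref{systema})-(\ref{systemb}). First I would invoke the ellipticity condition (\ref{betapos}): since $C_{1}\leq\widehat{\beta}(\xi)\leq C_{2}$ with $C_{1}>0$, the operator $K$ defined in (\ref{K1}) has Fourier multiplier $\sqrt{\widehat{\beta}(\xi)}$ bounded above and below by positive constants, so $K$ is an isomorphism of $H^{s}$ onto itself for every $s$, with $K^{-1}=\mathcal{F}^{-1}\big((\widehat{\beta}(\xi))^{-1/2}\widehat{(\cdot)}(\xi)\big)$ also bounded on each $H^{s}$. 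Given the data $(u_{0}^{\epsilon},w_{0}^{\epsilon})$ bounded in $H^{s+P}\times H^{s+P}$, I set $v_{0}^{\epsilon}:=K^{-1}w_{0}^{\epsilon}$; boundedness of $K^{-1}$ on $H^{s+P}$ shows that $\mathbf{u}_{0}^{\epsilon}:=(u_{0}^{\epsilon},v_{0}^{\epsilon})$ is bounded in $X^{s+P}=H^{s+P}\times H^{s+P}$.

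Next I would apply Theorem \ref{realt} to this family $\mathbf{u}_{0}^{\epsilon}$: there exist $\epsilon_{0}>0$, $T>0$ and a unique family $\mathbf{u}^{\epsilon}=(u^{\epsilon},v^{\epsilon})_{0<\epsilon<\epsilon_{0}}$ bounded in $C\big([0,T/\epsilon^{p}];X^{s+D}\big)$ solving (\ref{systema})-(\ref{systemb}) with these initial values. The scalar function $u^{\epsilon}$ is then the candidate solution of (\ref{nw})-(\ref{ini}). To see that it solves (\ref{nw}), I differentiate (\ref{systema}) in $t$ and substitute (\ref{systemb}): $u_{tt}^{\epsilon}=K(v_{t}^{\epsilon})_{x}=K\big(Ku_{x}^{\epsilon}+\epsilon^{p}K(u^{\epsilon})^{p+1}_{x}\big)_{x}=K^{2}\big(u^{\epsilon}+\epsilon^{p}(u^{\epsilon})^{p+1}\big)_{xx}=\beta\ast\big(u^{\epsilon}+\epsilon^{p}(u^{\epsilon})^{p+1}\big)_{xx}$, using $K^{2}w=\beta\ast w$ and the commutativity of $K$ and $D_{x}$ as Fourier multipliers; here $(u^{\epsilon})^{p+1}\in H^{s+D}$ by the Moser estimate (\ref{moser}) since $s+D>s_{0}$, so all derivatives are classical in the $H^{s+D-2}$-valued sense. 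For the initial conditions, $u^{\epsilon}(x,0)=u_{0}^{\epsilon}(x)$ is immediate, and $u_{t}^{\epsilon}(x,0)=K(v^{\epsilon})_{x}(x,0)=K(v_{0}^{\epsilon})_{x}=(Kv_{0}^{\epsilon})_{x}=(w_{0}^{\epsilon})_{x}$, which is exactly $u_{1}(x)$.

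For the regularity claim, boundedness of $\mathbf{u}^{\epsilon}$ in $C\big([0,T/\epsilon^{p}];X^{s+D}\big)$ gives $u^{\epsilon}$ bounded in $C\big([0,T/\epsilon^{p}];H^{s+D}\big)$; from (\ref{systema}), $u_{t}^{\epsilon}=K(v^{\epsilon})_{x}$ with $v^{\epsilon}$ bounded in $C\big([0,T/\epsilon^{p}];H^{s+D}\big)$ and $K$ bounded, so $u_{t}^{\epsilon}$ is bounded in $C\big([0,T/\epsilon^{p}];H^{s+D-1}\big)$, establishing membership in $C^{1}\big([0,T/\epsilon^{p}];H^{s+D-1}\big)$. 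Uniqueness transfers from Theorem \ref{realt}: if $\tilde u^{\epsilon}$ were another solution of (\ref{nw})-(\ref{ini}) in the stated class, then $\tilde v^{\epsilon}$ defined by $\tilde v^{\epsilon}(x,t)=K^{-1}v_{0}^{\epsilon}$-compatible integration --- more precisely, setting $\tilde v^{\epsilon}=v_{0}^{\epsilon}+\int_{0}^{t}\big(K^{-1}\tilde u_{tt}^{\epsilon}\,/\,\text{(derivative juggling)}\big)$ --- is cleaner to handle by noting that $(\tilde u^{\epsilon}, \tilde v^{\epsilon})$ with $\tilde v^{\epsilon}:=K^{-1}\!\int_{0}^{t}\!\big(\text{RHS}\big)$ solves the first-order system; I would instead argue directly that given a scalar solution one recovers $\tilde v^{\epsilon}$ via $\tilde v_{x}^{\epsilon}=K^{-1}\tilde u_{t}^{\epsilon}$ together with the initial value $\tilde v^{\epsilon}(\cdot,0)=v_{0}^{\epsilon}$, so that $(\tilde u^{\epsilon},\tilde v^{\epsilon})$ solves (\ref{systema})-(\ref{systemb}) and must coincide with $\mathbf{u}^{\epsilon}$.

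I expect the main obstacle to be precisely this last point: reconstructing $v^{\epsilon}$ from $u^{\epsilon}$ requires that $K^{-1}u_{t}^{\epsilon}$ be a spatial derivative of an $H^{s+D}$ function. This is where ellipticity is genuinely used --- not merely boundedness of $K^{-1}$, but the fact that $u_{1}=(w_{0})_{x}$ is given in divergence form, which propagates: from (\ref{nw}), $u_{tt}^{\epsilon}$ is a second $x$-derivative, so $u_{t}^{\epsilon}(x,t)-u_{1}(x)=\int_{0}^{t}u_{tt}^{\epsilon}\,dt'$ is an $x$-derivative, hence $u_{t}^{\epsilon}$ is an $x$-derivative of an $H^{s+D}$ function (using $u_{tt}^{\epsilon}=\beta\ast(\cdots)_{xx}$ and that $K^{-1}$ preserves the divergence structure), and one defines $v^{\epsilon}$ by antidifferentiating $K^{-1}u_{t}^{\epsilon}$ with the correct constant $v_{0}^{\epsilon}$. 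The remaining steps --- the Moser estimate for $(u^{\epsilon})^{p+1}$, boundedness bookkeeping, and the time-derivative regularity --- are routine.
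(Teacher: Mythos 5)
Your proposal is correct and follows essentially the same route as the paper: under the ellipticity condition (\ref{betapos}) the operator $K$ is invertible on each $H^{s}$, so one sets $v_{0}^{\epsilon}=K^{-1}w_{0}^{\epsilon}$ and applies Theorem \ref{realt} to the system (\ref{systema})-(\ref{systemb}), the equivalence with (\ref{nw})-(\ref{ini}) for data of the form $u_{1}=K(v_{0})_{x}$ being exactly the reduction the paper uses. The only difference is that you spell out the recovery of $v^{\epsilon}$ from a scalar solution and the transfer of uniqueness, which the paper treats as immediate; your somewhat hesitant reconstruction step does go through (define $\tilde v^{\epsilon}=v_{0}^{\epsilon}+\int_{0}^{t}K\big(\tilde u^{\epsilon}+\epsilon^{p}(\tilde u^{\epsilon})^{p+1}\big)_{x}\,dt'$ and check $K\tilde v^{\epsilon}_{x}=\tilde u^{\epsilon}_{t}$ from the equation and the divergence-form initial velocity).
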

\begin{remark}\label{latrem}
    If $\beta$ is not elliptic, then $K$ is not invertible and we cannot perform the above transformation. In this case, if $u_{1}$ is of the form $u_{1}=\big(Kv_{0}(x)\big)_{x}$ with $\big(u_0^{\epsilon}, v_{0}^{\epsilon}\big)$ bounded in $H^{s+P}\times H^{s+P}$, then Theorem \ref{realw} will still be valid.
\end{remark}

If we make the transformation  $U=\epsilon u$, the Cauchy problem (\ref{nw})-(\ref{ini}) takes the form
\begin{eqnarray}
    U_{tt}=\beta\ast \big( U+ U^{p+1}\big)_{xx}, ~~~~ x\in \mathbb{R},~~~t>0,  \label{nw-u} \\
    U(x,0)=\epsilon u_{0}(x), ~~~~ U_{t}(x,0)=\epsilon u_{1}(x),  \label{ini-u}
\end{eqnarray}
 where the small parameter $\epsilon$ is transferred from the nonlocal equation to the initial conditions. Then we get the following result about the long-time existence for small initial data.
\begin{theorem} \label{bigU}
    Suppose $\beta$ satisfies the ellipticity condition (\ref{betapos}). Let $D>3$, $P>P_{min}$, $~s\geq s_{0}+3$ and $\big(u_0, w_{0}\big)$ be bounded in $H^{s+P}\times H^{s+P}$. Then there exist some $\epsilon_0>0$, $~T>0$ and a unique family  $\big(U^{\epsilon}\big)_{0<\epsilon<\epsilon_0}$ bounded in $C\big([0,{T\over \epsilon^p}]; H^{s+D}\big)\times C^{1}\big([0,{T\over \epsilon^p}];H^{s+D-1}\big)$ and satisfying (\ref{nw-u})-(\ref{ini-u}) with $u_{1}=(w_0)_x$.
\end{theorem}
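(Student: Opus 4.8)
The plan is to derive Theorem \ref{bigU} from Theorem \ref{realw} by undoing the scaling $U=\epsilon u$. The key observation is that $(U, U^{p+1})$ in \eqref{nw-u} corresponds exactly to $(\epsilon u, \epsilon^{p+1} u^{p+1}) = \epsilon(u, \epsilon^{p} u^{p+1})$, so that dividing \eqref{nw-u} by $\epsilon$ recovers precisely equation \eqref{nw} for $u$. Thus, given $(u_0, w_0)$ bounded in $H^{s+P}\times H^{s+P}$, I would apply Theorem \ref{realw} with this same data to obtain $\epsilon_0>0$, $T>0$, and a unique family $(u^{\epsilon})_{0<\epsilon<\epsilon_0}$ bounded in $C\big([0,{T\over \epsilon^p}];H^{s+D}\big)\cap C^{1}\big([0,{T\over \epsilon^p}];H^{s+D-1}\big)$ solving \eqref{nw}--\eqref{ini} with $u_0(x)=u_0(x)$, $u_1(x)=(w_0(x))_x$. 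Setting $U^{\epsilon}=\epsilon u^{\epsilon}$ then gives a family solving \eqref{nw-u}--\eqref{ini-u}: the equation transforms as noted, the initial data become $U^{\epsilon}(x,0)=\epsilon u_0(x)$ and $U^{\epsilon}_t(x,0)=\epsilon u_1(x)=\epsilon(w_0(x))_x=(\epsilon w_0(x))_x$, matching \eqref{ini-u} with $u_1=(w_0)_x$ in the sense that the displacement data are scaled by $\epsilon$ exactly as \eqref{ini-u} prescribes.

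The boundedness of $(U^{\epsilon})$ in the stated spaces is immediate: since $\|U^{\epsilon}\|_{H^{s+D}}=\epsilon\|u^{\epsilon}\|_{H^{s+D}}\leq\epsilon_0\|u^{\epsilon}\|_{H^{s+D}}$ and $(u^{\epsilon})$ is already bounded uniformly in $\epsilon$, multiplying by $\epsilon\leq\epsilon_0$ only improves the bound; the same applies to the $C^1$ norm of the time derivative. Uniqueness of $(U^{\epsilon})$ follows from uniqueness of $(u^{\epsilon})$ in Theorem \ref{realw}, since the map $u\mapsto\epsilon u$ is a bijection between solution sets of \eqref{nw}--\eqref{ini} and \eqref{nw-u}--\eqref{ini-u} for each fixed $\epsilon$. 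The ellipticity hypothesis \eqref{betapos} is inherited directly, as it is needed only to invoke Theorem \ref{realw}; as in that theorem, it is used to recover $v_0=K^{-1}w_0$ from the prescribed $u_1=(w_0)_x$, so that the first-order system \eqref{systema}--\eqref{systemb} can be set up and Corollary \ref{maintau} applied.

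Honestly, there is no genuine obstacle here — this is a bookkeeping corollary whose entire content is the observation that the parameter $\epsilon$ can be shifted from the nonlinear coefficient in \eqref{nw} to the amplitude of the initial data via $U=\epsilon u$, a consequence of the homogeneity of the power nonlinearity. The only point requiring a word of care is matching conventions: the statement of Theorem \ref{bigU} writes the initial data as $\big(u_0, w_0\big)$ without the $\epsilon$-dependence superscript used in Theorem \ref{realw}, so one should either take the family $u_0^{\epsilon}\equiv u_0$, $w_0^{\epsilon}\equiv w_0$ constant in $\epsilon$ (trivially bounded), or note that the argument goes through verbatim for genuinely $\epsilon$-dependent bounded families. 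Either way, the conclusion is that solutions to \eqref{nw-u}--\eqref{ini-u} with initial data of size $O(\epsilon)$ persist on the long time interval $[0,T/\epsilon^p]$, which is the desired small-data long-time existence statement.
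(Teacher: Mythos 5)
Your proposal is correct and follows exactly the route the paper intends: the paper presents Theorem \ref{bigU} as an immediate consequence of Theorem \ref{realw} via the substitution $U=\epsilon u$, which moves the small parameter from the nonlinearity in (\ref{nw}) to the initial data in (\ref{ini-u}). Your verification of the scaling of the equation, the data, the uniform bounds, and uniqueness is precisely the (implicit) argument in the paper, so nothing further is needed.
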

\begin{remark}
    The extra smoothness requirement $\big(\mathbf{u}_{0}^{\epsilon}\big)\in X^{s+P}$ (whereas solution is in $X^{s+D}$ with $D<P$) in the above theorems is due to the technical aspect of the Nash-Moser scheme. Computation shows that  the optimal values are obtained when the pair $(P, D)$ is  approximately $(55.34, 7.35)$.  In certain cases the lower bound for $P$ may be improved. In particular, we will consider the case when $\beta$  is more regular, namely when (\ref{order}) is satisfied with $r\geq 2$. In this case both $K$ and $KD_x$ are bounded operators on $H^s$. Then (\ref{sysa}) is an initial-value problem defined for $H^{s}$-valued ODEs, so that we have local existence without loss of derivatives. Moreover, with the simplified energy
    \begin{displaymath}
        \widetilde{E}_{s}^{2}(\tau)=\frac{1}{2}\Big( \Vert u^{\epsilon}(\tau)\Vert_{H^{s}}^{2}
                                +\Vert v^{\epsilon}(\tau)\Vert_{H^{s}}^{2}\Big),
    \end{displaymath}
    one directly gets from (\ref{sysa})
    \begin{displaymath}
        {d\over {d\tau}}\widetilde{E}_{s}^{2}(\tau)=\big\langle KD_{x}\big(u^{\epsilon}(\tau)\big)^{p+1}, v^{\epsilon}(\tau)\big\rangle_{H^{s}}\leq C \widetilde{E}_{s}^{p+2}(\tau).
    \end{displaymath}
    This differential inequality  gives  uniform estimates for the solutions $\big(u^{\epsilon}(\tau),  v^{\epsilon}(\tau)\big)$. Changing back the variable to $t$, we get the following result for (\ref{systema})-(\ref{systemb}) with no extra smoothness requirement. Clearly the same conclusion will be valid for the other theorems, namely when $r\geq 2$ we can take $P=D=0$. We note that this result is the long-time version of the local existence result given in \cite{Duruk2010}.
\end{remark}
\begin{theorem}\label{theoD}
    Let $s\geq s_{0}$. Suppose the kernel $\beta$ satisfies (\ref{order}) with $r\geq 2$. Let $\big(\mathbf{u}_{0}^{\epsilon}\big)$ be bounded in $X^{s}$. Then there exist some $\epsilon_0 >0$, $~T>0$ and a unique family  $\big(\mathbf{u}^{\epsilon}\big)_{0<\epsilon<\epsilon_0}$ bounded in $C^{1}\big([0,{T\over \epsilon^p}]; X^{s}\big)$ and satisfying (\ref{systema})-(\ref{systemb}) with initial values  $\mathbf{u}_{0}(x) =\mathbf{u}^{\epsilon}_0(x)$.
\end{theorem}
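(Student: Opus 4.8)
The plan is to exploit the extra regularity afforded by condition (\ref{order}) with $r \geq 2$, which makes the convolution operator $K$ a smoothing operator of order $-r/2 \leq -1$, so that $KD_x$ is bounded on $H^s$. First I would observe that under this hypothesis the system (\ref{systema})-(\ref{systemb}), equivalently (\ref{sysa}), becomes a genuine ODE in the Banach space $X^s$: the right-hand side $\mathbf{F}[\mathbf{u}] = -\frac{1}{\epsilon^p}\mathcal{L}\mathbf{u} - \mathcal{N}[\mathbf{u}]$ maps $X^s$ into $X^s$ (not merely $X^{s-1}$), since $\mathcal{L} = \begin{pmatrix} 0 & -KD_x \\ -KD_x & 0\end{pmatrix}$ has bounded entries and $\mathcal{N}[\mathbf{u}] = (0, -KD_x(u^{p+1}))$ is bounded on $X^s$ by the Moser estimate (\ref{moser}) together with boundedness of $KD_x$. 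Moreover $\mathbf{F}$ is locally Lipschitz on $X^s$ because the map $\mathbf{u} \mapsto u^{p+1}$ is locally Lipschitz in $H^s$ (again by the algebra property for $s > s_0 > 1/2$). Hence the standard Picard-Lindel\"of theorem for Banach-space-valued ODEs gives, for each $\epsilon$, a unique local solution $\mathbf{u}^\epsilon \in C^1([0,T_\epsilon); X^s)$ with $\mathbf{u}^\epsilon(0) = \mathbf{u}_0^\epsilon$, and a continuation criterion: the solution extends as long as $\|\mathbf{u}^\epsilon(\tau)\|_{X^s}$ stays bounded.

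The second and central step is the energy estimate already sketched in the Remark. Using the simplified energy $\widetilde{E}_s^2(\tau) = \frac12(\|u^\epsilon(\tau)\|_{H^s}^2 + \|v^\epsilon(\tau)\|_{H^s}^2)$, differentiate along the flow of (\ref{sysa}): the contributions from the skew-symmetric operator $\frac{1}{\epsilon^p}\mathcal{L}$ cancel in the $H^s$ inner product (since $KD_x$ is skew-adjoint on $L^2$ and commutes with $\Lambda^s$, as $\widehat\beta$ is even and real), so only the nonlinear term survives, giving
\begin{displaymath}
  \frac{d}{d\tau}\widetilde E_s^2(\tau) = \langle KD_x(u^\epsilon)^{p+1}, v^\epsilon\rangle_{H^s} \leq C\|u^\epsilon\|_{H^s}^{p+1}\|v^\epsilon\|_{H^s} \leq C\,\widetilde E_s^{p+2}(\tau),
\end{displaymath}
where the operator norm of $KD_x$ on $H^s$ is absorbed into $C$. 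Crucially, the right-hand side is \emph{independent of $\epsilon$}. Dividing by $\widetilde E_s^{p+1}$ (or integrating the separable inequality $\widetilde E_s' \leq C\widetilde E_s^{p+1}$ directly) yields $\widetilde E_s(\tau) \leq \widetilde E_s(0)\big(1 - Cp\,\widetilde E_s(0)^p\,\tau\big)^{-1/p}$. Since $\big(\mathbf{u}_0^\epsilon\big)$ is bounded in $X^s$, say $\|\mathbf{u}_0^\epsilon\|_{X^s} \leq M$, we have $\widetilde E_s(0) \leq M$ uniformly, so there is a $T > 0$ (for instance any $T < \frac{1}{2CpM^p}$) such that $\widetilde E_s(\tau) \leq 2M$ for all $\tau \in [0,T]$ and all $0 < \epsilon < \epsilon_0$. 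By equivalence of $\widetilde E_s$ and the $X^s$-norm, $\|\mathbf{u}^\epsilon(\tau)\|_{X^s}$ stays bounded on $[0,T]$, so the continuation criterion forces $T_\epsilon \geq T$; the family $(\mathbf{u}^\epsilon)$ is thus bounded in $C([0,T]; X^s)$, and reading (\ref{sysa}) shows $\mathbf{u}^\epsilon_\tau$ is likewise bounded in $C([0,T]; X^s)$, giving boundedness in $C^1([0,T]; X^s)$.

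The final step is the change of variables back to the original time: set $t = \tau/\epsilon^p$, which maps $\tau \in [0,T]$ to $t \in [0, T/\epsilon^p]$ and turns (\ref{sysa}) back into (\ref{systema})-(\ref{systemb}); the uniform $C^1([0,T]; X^s)$ bound becomes a bound in $C^1([0,T/\epsilon^p]; X^s)$ (note that the $X^s$-norm, which involves no time derivative, is unaffected by the rescaling, and the $X^{s-1}$-norm of $\mathbf{u}_t = \epsilon^p \mathbf{u}_\tau$ is controlled since $\mathbf{u}_\tau$ is bounded in $X^s \subset X^{s-1}$). This gives exactly the conclusion of Theorem \ref{theoD}. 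I do not expect a serious obstacle here: the only mild point of care is verifying the cancellation of the linear term in the energy identity, which requires $\widehat\beta \geq 0$ real and even so that $K = \mathcal{F}^{-1}\sqrt{\widehat\beta}\,\mathcal{F}$ is self-adjoint and $KD_x$ is skew-adjoint on $L^2$; this is guaranteed by our standing assumption (\ref{betabound}) on $\beta$. Everything else is the textbook ODE argument (e.g., as in \cite{Taylor2011}) combined with the uniform-in-$\epsilon$ energy inequality, and in particular no Nash-Moser machinery and no loss of derivatives are needed, which is precisely why $P = D = 0$ works.
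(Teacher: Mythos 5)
Your proposal is correct and follows essentially the same route as the paper's own argument (sketched in the remark preceding the theorem): for $r\geq 2$ the operator $KD_{x}$ is bounded on $H^{s}$, so (\ref{sysa}) is an $X^{s}$-valued ODE solvable by Picard iteration, and the simplified energy satisfies $\frac{d}{d\tau}\widetilde{E}_{s}^{2}(\tau)=\langle KD_{x}(u^{\epsilon})^{p+1},v^{\epsilon}\rangle_{H^{s}}\leq C\widetilde{E}_{s}^{p+2}(\tau)$ with $C$ independent of $\epsilon$, which after rescaling gives existence and uniform bounds on $[0,T/\epsilon^{p}]$. The only slip is your claim that $\mathbf{u}^{\epsilon}_{\tau}$ is uniformly bounded in $C([0,T];X^{s})$ --- it is not, since $\mathbf{u}^{\epsilon}_{\tau}$ contains the term $\epsilon^{-p}\mathcal{L}\mathbf{u}^{\epsilon}$ --- but the $C^{1}$ bound asserted in the theorem (with $\mathbf{u}_{t}$ measured in $X^{s}$, not merely $X^{s-1}$) follows directly from the unscaled system (\ref{systema})--(\ref{systemb}), where $\mathbf{u}_{t}=\big(Kv_{x},\,Ku_{x}+\epsilon^{p}K(u^{p+1})_{x}\big)$ is uniformly bounded in $X^{s}$ by the boundedness of $KD_{x}$ and the algebra property.
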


In the remainder of this section, we  provide some examples of the general class (\ref{nw}): the classical elasticity  equation, the improved Boussinesq equation and the lattice equation.
\begin{example}\label{exam1}(The Classical  Elasticity Equation)
     When $\beta=\delta$ where $\delta$ is the Dirac measure, we capture the well-posedness result for the one-dimensional classical (local) elasticity equation  (\ref{wa-eq}).    Here $\widehat{\beta }=1$ so $K$ is the identity operator and Theorem \ref{realw} holds.
\end{example}
\begin{example}\label{exam2}(The Improved Boussinesq Equation)
    When $\beta (x)=\frac{1}{2}e^{-\left\vert x\right\vert }$, the nonlocal equation (\ref{nw}) becomes the improved Boussinesq equation given by (\ref{ib}). The Fourier transform of the kernel is  $\widehat{\beta }(\xi )=(1+\xi^{2})^{-1}$. So the kernel satisfies both the regularity assumption (\ref{order}) with $r=2$ and  the ellipticity condition (\ref{betapos}).  Then Theorem \ref{theoD} holds.
\end{example}
\begin{example}\label{exam3}(The Lattice Equation)
    The class (\ref{nw}) involves  also the differential-difference equation
    \begin{equation}
        u_{tt}=\Delta^{d}\big(u+\epsilon ^{p}u^{p+1}\big)  \label{lattice}
    \end{equation}%
    with the discrete Laplacian operator
    \begin{equation}
        \Delta^{d}z=z(x-1,t)-2z(x,t)+z(x+1,t).  \label{lap}
    \end{equation}%
    Equation (\ref{lattice}) is widely used as a model in  one dimensional non-linear lattice dynamics. Indeed for the triangular kernel,   $\beta (x)=1-|x|$ for $|x|\leq 1 $ and $\beta(x)=0$ elsewhere, (\ref{nw}) reduces to (\ref{lattice}). We have  $\widehat{\beta }(\xi )={4\over {\xi^{2}}}\sin^{2}({\xi\over 2})$ which satisfies (\ref{order}) with $r=2$ but fails to be elliptic. The operator $K$ with symbol ${2\over \xi}\sin({\xi\over 2})$ can be explicitly calculated, $Ku=\chi_{[{{-1}\over 2}, {1\over 2}]} \ast u$ with the characteristic function  $\chi$. Then following Remark \ref{latrem}, for initial data of the form   $\big(u_0^{\epsilon}, u_{1}^{\epsilon}\big)$ with $u_{1}^{\epsilon}=\big(Kv_{0}^{\epsilon}(x)\big)_{x}$ and $\big(u_0^{\epsilon}, v_{0}^{\epsilon}\big)$ bounded in $H^{s}\times H^{s}$, we have uniform bounds and long time existence in $C^{1}\big([0,{T\over \epsilon^p}]; H^{s}\big)$.
\end{example}
Local existence of smooth solutions of (\ref{wa-eq})  is a well-known result. In fact the much more complicated 3D case can be found in \cite{Hughes1977}. For long-time existence of the 3D case we refer to \cite{Klainerman1984} and the references therein. Our result in Example \ref{exam1} recaptures in fact  the simpler 1D case. The wave equations in Examples \ref{exam2} and \ref{exam3} are particular cases of (\ref{nw}) with $r\geq 2$ and the local well-posedness for those equations follows from the general result in \cite{Duruk2010}. Our results here yield long-time existence and uniform estimates of the solutions.

\end{document}